\documentclass[a4paper,11pt,oneside  ]{article}	

\makeindex
\usepackage{hyperref}
\usepackage{graphicx}
\usepackage{enumerate}
\usepackage[disable]{todonotes}
\providecommand{\todoneukamm}[1]{\todo[inline,author={SN}, color=yellow]{#1}}

\hypersetup{colorlinks, linkcolor=blue, urlcolor=red, filecolor=green, citecolor=blue}
\parindent0mm

\usepackage{amsmath}
\usepackage{amsthm}
\usepackage{amssymb}
\usepackage{authblk}
\usepackage{enumerate}
\usepackage{fullpage}
\usepackage{esint}
\usepackage{mathrsfs}

\theoremstyle{plain}
\newtheorem{theorem}{Theorem}[section]

\newtheorem{lemma}[theorem]{Lemma}
\newtheorem{proposition}[theorem]{Proposition}
\newtheorem{algorithm}[theorem]{Algorithm}

\theoremstyle{definition}
\newtheorem{definition}[theorem]{Definition}

\newtheorem{example}{Example}[section]

\theoremstyle{remark}

\newcommand{\N}{\mathbb{N}}
\newcommand{\R}{\mathbb{R}}

\newcommand{\frankconstant}{\boldsymbol{\kappa}}
\newcommand{\eps}{\ensuremath{\varepsilon}}

\newcommand\dist{\operatorname{dist}}
\newcommand\sym{\operatorname{sym}}
\newcommand{\SO}[1]{\operatorname{SO}(#1)}

\newcommand\id{{\operatorname{I}}}

\newcommand{\step}[1]{\medskip\noindent\textbf{Step #1. }}

\newcommand{\wto}{\rightharpoonup}

\newcommand{\dev}{{\rm dev}}
\def\barx{\bar{\boldsymbol x}}
\def\Hrel{\mathbb H_{\rm rel}}
\def\Hmicro{\mathbb H_{\rm micro}}
\def\Hmacro{\mathbb H_{\rm macro}}
\def\Hres{\mathbb H_{\rm res}}
\def\res{\operatorname{\rm res}}
\def\pre{\operatorname{\rm pre}}
\def\trace{\operatorname{\rm trace}}
\def\Skew{\operatorname{\rm skew}}

\begin{document}
\begin{center}
  \LARGE
  Modeling and simulation of nematic LCE rods
  \bigskip

  \normalsize
  S\"oren Bartels\footnote{bartels@mathematik.uni-freiburg.de}\textsuperscript{*}, Max Griehl\footnote{max.griehl@tu-dresden.de}\textsuperscript{\$}, Jakob Keck\footnote{jakob.keck@math.uni-freiburg.de}\textsuperscript{*}, and
  Stefan Neukamm\footnote{stefan.neukamm@tu-dresden.de}\textsuperscript{\$}
  \par \bigskip

  \textsuperscript{*}Department of Applied Mathematics, University of Freiburg \par
  \textsuperscript{\$}Faculty of Mathematics, Technische Universit\"at Dresden\par \bigskip

  \today
\end{center}

\begin{abstract}
  We introduce a nonlinear, one-dimensional bending-twisting model for an inextensible bi-rod that is composed of a nematic liquid crystal elastomer. The model combines an elastic energy that is quadratic in curvature and torsion with a Frank-Oseen energy for the liquid crystal elastomer. Moreover, the model features a nematic-elastic coupling that relates the crystalline orientation with a spontaneous bending-twisting term. We show that the model can be derived as a $\Gamma$-limit from three-dimensional nonlinear elasticity. Moreover, we introduce a numerical scheme to compute critical points of the bending-twisting model via a discrete gradient flow. We present various numerical simulations that illustrate the rich mechanical behavior predicted by the model. The numerical experiments reveal good stability and approximation properties of the method.

    \textbf{Keywords:} dimension reduction, nonlinear elasticity, nonlinear rods, liquid crystal elastomer, constrained finite element method.

  \textbf{MSC-2020:} 74B20; 76A15; 74K10; 65N30; 74-10.
  
\end{abstract}
\tableofcontents
\section{Introduction}
Nematic liquid crystal elastomers (LCE) are nonlinearly elastic materials that feature an additional orientational internal degree of freedom. They are composed of a polymer network with incorporated liquid crystals---rod-like molecules that tend to align in a nematic phase. Nematic LCE feature a nematic-elastic coupling between the orientation of the liquid cyrstals and the elastic properties of the polymer network. More specificially, the material tends to stretch in the direction parallel to the liquid crystal orientation and shrinks in orthogonal directions. Furthermore, it is possible to tweak the orientation of the liquid crystals by fields and to (de)activate the nematic-elastic coupling by external stimuli (e.g., temperature). For this reasons,  nematic LCE exhibit interesting  mechanical properties (e.g., thermo-mechanical coupling \cite{B_broer11, B_Warner07} or soft elasticity \cite{B_Finkelman91, B_Kundler95}), and are used to design active materials, see \cite{B_White15,ware2016localized, MaJaZa18}. In this context, slender structure such as thin films and rods are of interest. In recent years nonlinear models for thin films and rods made of nematic LCE have also been intensively studied from a mathematical perspective. In particular, the derivation of lower-dimensional models from thee-dimensional models has been discussed, e.g., see \cite{B_conti2002soft, B_conti2002semisoft, B_Plucinsky18, B_plucinsky2018patterning, B_cesana2015effective,B_Agostiniani17plate, B_Agostiniani17platesoft, B_Agostiniani17platehetero, B_Agostiniani17ribbon, bauer2019derivation}, and reliable numerical schemes have been developed, e.g., see~\cite{BaPa21, bonito2021numerical, BBMN18, BaBoNo17, sander2010geodesic, sander2016numerical, le1992finite, bergou2008discrete}
for numerical methods for slender structures,
\cite{NoWaZh17, nitschke2020liquid, nochetto2017finite, borthagaray2020structure, walker2020finite}
for LCE and director fields,
and \cite{bartels2022nonlinear}
for models that combination thin-film mechanics and director fields.

\smallskip

Our paper is devoted to the modeling and simulation of nonlinear bi-rods that are composed of nematic LCE and a usual nonlinear elastic material. Starting from a three-dimensional nonlinear model that invokes a standard energy functional from nonlinear elasticity, a Frank-Oseen energy for the LCE material, and a nematic-elastic coupling term, 
we derive via $\Gamma$-convergence a model for an inextensible rod that is capable to describe large bending and twisting deformations, and a coupling mechanism that relates the local orientation of the LCE with a spontaneous curvature/torsion-term. In contrast to other works, e.g., \cite{goriely2022rod}, in our model the director field is not prescribed but an additional degree of freedom of the model; moreover, the derivation is ansatz-free in the sense of $\Gamma$-convergence. It is based on the one hand on the recently introduced approach in \cite{bartels2022nonlinear} where the derivation of nematic LCE plates is studied, and on the other hand, on \cite{neukamm2012rigorous, bauer2019derivation} where the simultaneous homogenization and dimension reduction of prestrained rods is analyzed as an extension of the seminal work \cite{mora2002derivation}.
\smallskip

Our effective one-dimensional description of LCE rods allows for
efficient numerical simulations of complex problem settings. We follow
\cite{BarRei20} and use standard $C^0$ and $C^1$ conforming finite
element spaces that are subordinated to a partitioning of the straight
reference configuration to approximate director fields and the bending
deformation, respectively. Geometric constraints such as the
conditions for the material frame are imposed at nodes of the
partitioning via suitable linearizations and penalty terms. Similarly,
the discretization of the unit-length constraint for the director
field describing the LCE orientation is imposed at the nodes. We then
use a semi-implicit gradient descent method to decrease the
one-dimensional energy functional from a given initial state to obtain
stationary configurations via sequences of linear problems with simple
system matrices. Our experiments show how a periodic bending behaviour
can be controlled in a quasi-static setting via a time-dependent external field.
Our simulations also illustrate how internal material parameters, that can be changed via external
stimuli, affect the bending behavior in the case of compressive,
twist-inducing boundary conditions. The numerical experiments reveal
good stability and approximation properties of the iterative method
and the devised discretization scheme. They lead to meaningful results
within minutes for an elementary implementation on standard desktop
computers.

\smallskip

The paper is structured as follows. In Section~\ref{S:2} we first introduce the one-dimensional bending-twisting model. We then introduce a three-dimensional nonlinear elasticity model and show that it $\Gamma$-converges to the one-dimensional model, see Section~\ref{S:three}. The effective coefficients of the one-dimensional depend on the original material, the geometry of the cross-section and the domain occupied by the LCE. In Section~\ref{S:4} we present their definition and derive simplified  formulas that hold in special settings. In Section~\ref{S:num} we introduce a discretization of the one-dimensional model minimization problem via a discrete gradient flow, and we explore the model via numerical simulations. All proofs are presented in Section~\ref{S:proof}.

\smallskip

\section{A bi-rod model for  nematic LCEs and its derivation}\label{S:2}
In this section we introduce a one-dimensional bending-twisting model that describes an inextensible rod that is composed of a nematic LCE and a nonlinearly elastic material, and we explain its derivation from three-dimensional nonlinear elasticity via $\Gamma$-convergence, see Section~\ref{S:three} below.

\subsection{The one-dimensional model}
We denote by $\omega:=(0,L)$ the reference domain of the rod. We describe a configuration of the rod by a triplet $(y,R,n)$ satisfying
\begin{equation}\label{def:rodconf}
  \begin{aligned}
    &y\in H^2(\omega;\R^3),\qquad R\in H^1(\omega;\R^{3\times 3}),\qquad n\in H^1(\omega;\R^3)\\
    &\text{such that for a.e. $x_1\in\omega$, }\partial_1y(x_1)=R(x_1)e_1,\quad R(x_1)\in\SO 3,\quad |n(x_1)|=1.
  \end{aligned}
\end{equation}
Here, $y$ describes the deformation of the rod and $R$ the deformation of an associated orthonormal frame. The field $n$ describes the orientation of the liquid crystals in global coordinates. We denote by
\begin{equation*}
  \mathcal A:=\{(y,R,n)\,:\,(y,R,n)\text{ satisfies }\eqref{def:rodconf}\,\}
\end{equation*}
the set of all \textit{rod configurations} and call a pair $(y,R)$ satisfying \eqref{def:rodconf} a \textit{framed curve}. We consider the energy functional $\mathcal E:\mathcal A\to\R$ defined by
\begin{align}\label{def:limitI}
  \begin{aligned}
    \mathcal E(y,R,n):=&\int_\omega  \bar Q\Big(R^\top\partial_1R +\bar r K_{\pre}(\tfrac13 I-\hat n\otimes \hat n)\Big)+\bar r^2 E_{\res}\big(\tfrac13 I-\hat n\otimes \hat n)\big)\,dx_1\\
    &+\frankconstant{}^2\int_\omega|\partial_1n|^2\,dx',\qquad \text{where }\hat n:=R^\top n,
  \end{aligned}
\end{align}
where
\begin{itemize}
\item $\bar Q:\R^{3\times 3}_{\Skew}\to\R$ is a positive definite quadratic form that describes the  bending--twisting energy; here and below, $\R^{3\times 3}_{\Skew}$ denotes the space of skew-symmetric matrices in $\R^{3\times 3}$.
\item $K_{\pre}:\R^{3\times 3}_{\dev}\to\R^{3\times 3}_{\Skew}$ is a linear map that describes the contribution of the nematic-elastic coupling that leads to spontaneous bending and twisting of the rod; here and below, $\R^{3\times 3}_{\dev}$ denotes the space of symmetric matrices in $\R^{3\times 3}$ with vanishing trace.
\item $E_{\res}:\R^{3\times 3}_{\dev}\to\R$ is a positive, semi-definite quadratic form that describes a residual energy that cannot be accomodated by bending or twisting of the rod.
\item $\bar r\in\R$ is a model parameter related to the strength of the nematic-elastic coupling.
\item $\frankconstant{}>0$ is a model parameter that is related to the scaling of the physical diameter of the rod, the shear moduls of the elastomer and the Frank elastic constant of the nematic LCE.
\end{itemize}

As we shall explain next, this model can be derived as a zero-thickness $\Gamma$-limit from a three-dimensional rod composed of an elastic material and an LCE-material. In this context, the precise definition of $\bar Q$ and $K_{\rm pre}$ depends on the considered material, the geometry of the cross-section of the rod and the geometry of the subdomain that is occupied by the nematic LCE material, see Definition~\ref{D:effective}.
\subsection{The three-dimensional model and $\Gamma$-convergence}\label{S:three}
The starting point of the derivation is the following three-dimensional situation: We consider an elastic composite material that occupies the three-dimensional, rod-like  domain $\Omega_h:=\omega\times hS$, where $0<h\ll 1$ denotes a (non-dimensionalized) thickness of the rod, $S\subset\R^2$ the rescaled cross-section, and $\omega:=(0,L)$ the mide-line of the rod. We assume that
\begin{equation}\label{ass:S}
  \begin{aligned}
    &S\subset\R^2\text{ is a simply connected, Lipschitz domain,}\\
    &\text{satisfying }\int_Sx_2\,d\bar x=\int_Sx_3\,d\bar x=\int_Sx_2x_3\,d\bar x=0.
  \end{aligned}
\end{equation}
Here and below, we use the notation $x=(x_1,\bar x)\in\R^3$ and $\bar x=(x_1,x_2)\in\R^2$. We note that the symmetry condition in \eqref{ass:S} is not a restriction, since it can always be achieved by rotating and translating $S$. We assume that the rod is composed of a conventional elastic material and a nematic LCE. The latter occupies the subbody $\Omega_{0,h}:=\omega\times (hS_0)$ where $S_0\subset S$ denotes a subdomain of the cross-section. We assume that
\begin{equation}\label{ass:S0}
  S_0\subset S\text{ is a simply connected, Lipschitz domain such that $|S_0|>0$ and $|S\setminus S_0|>0$}.
\end{equation}
To study the limit $h\to 0$, it is convenient to work with the rescaled domain $\Omega:=\omega\times S$ (resp. the rescaled subdomain $\Omega_0:=\omega\times S_0$). We therefore describe the deformation of the rod by a mapping $y_h:H^1(\Omega;\R^3)$, and the orientation of the LCE by a director field $n_h\in H^1(\Omega_0;\mathbb S^2)$. Here and below, $\mathbb S^2$ denotes unit-sphere in $\R^3$. We consider the energy functional 
\begin{equation}\label{eq:def:3d}
  \begin{aligned}
    \mathcal E_h(y_h,n_h):=&
    \frac{1}{h^2|S|}\int_{\Omega\setminus\Omega_0}W\Big(\bar x,\nabla_hy_h(x)\Big)\,dx+\frac{1}{h^2|S|}\int_{\Omega_0}W\Big(\bar x,L_h(n_h(x))^{-\frac12}\nabla_hy_h(x)\Big)\,dx\\
    &+\frac{\frankconstant{}^2}{|S_0|}\int_{\Omega_0}\big|\nabla_hn_h(x)(\nabla_hy_h(x))^{-1}\big|^2\det(\nabla_hy_h(x))\,dx.
  \end{aligned}
\end{equation}
The first integral is the elastic energy stored in the deformed material with reference configuration $\Omega\setminus\Omega_0$. 
Here, $\nabla_h:=(\partial_1,\frac1h\bar\nabla)$, $\bar\nabla:=(\partial_2,\partial_3)$ denotes the scaled gradient which emerges when passing from $\Omega_h$ to $\Omega$.
The second integral is the elastic energy of the nematic LCE. It invokes the step-length tensor 
\begin{equation*}
  L_h:\mathbb S^2\to\R^{3\times 3}_{\sym},\qquad L_h(n):=(1+h\bar r)^{-\frac13}(I + h\bar r n\otimes n),
\end{equation*}
which has been introduced by \cite{bladon1993transitions} to model the elastic-nematic coupling. The last integral is the (one-constant approximation of the) Frank-Oseen energy pulled back to the reference configuration.
As already mentioned, $\bar r\in\R$ and $\frankconstant{}>0$ denote model parameters.

We assume that the stored energy function $W:S\times\R^{3\times 3}\to[0,\infty]$ is Borel-measurable and satisfies for some $q>4$ and $C>0$, and for all $F\in\R^{3\times 3}$ and a.e.~$\bar x\in S$,
\begin{align}\tag{W1}
  &W(\bar x,RF)=W(\bar x,F)\qquad\text{for all }R\in\SO 3,\\\tag{W2}
  &W(\bar x,F)\geq \frac{1}{C} \dist^2(F,\SO 3)\text{ and } W(\bar x, I)=0,\\\tag{W3}
  &W(\bar x,\cdot)\text{ is $C^2$ in $\{F\in\R^{3\times 3}\,:\,\dist(F,\SO 3)<\tfrac1C\,\}$,}\\
  \tag{W4}
  &W(\bar x,F)\geq
    \begin{cases}
      \frac{1}{C}\max\{|F|^{q},\det(F)^{-\frac{q}{2}}\}-C &\text{if }\det F>0,\\
      +\infty&\text{else.}
    \end{cases}
\end{align}
The stored energy function thus describes a frame indifferent, cf.~(W1), material with a stress-free, non-degenerate reference configuration, cf.~(W2). Furthermore, by (W3) the material law is linearizable at identity and the linearization is Korn-elliptic, i.e., the quadratic form defined by
\begin{equation}\label{eq:def:Q}
  Q:S\times\R^{3\times 3}\to\R,\qquad Q(\bar x,G):=\frac12\nabla^2W(\bar x,I)G\cdot G,
\end{equation}
vanishes on $\R^{3\times 3}_{\Skew}$ and is positive definite on $\R^{3\times 3}_{\sym}$.
We remarkt that \eqref{eq:def:3d} is precisely the analogue for rods of the energy functional considered in \cite{bartels2022nonlinear} where the a bending model for LCE-plates is studied. 
The next theorem shows that $\mathcal E_h$ $\Gamma$-converges as $h\to 0$ to the one-dimensional limiting model  \eqref{def:limitI}:
\begin{theorem}[Derivation via dimension reduction]\label{T1}\mbox{}
  Let the cross-section $S$ and $S_0\subseteq S$ satisfy \eqref{ass:S}, \eqref{ass:S0}, and let $W$ satisfy (W1) -- (W4). Let $\mathcal E:\mathcal A\to\R$ be defined by \eqref{def:limitI} with $\bar Q$, $K_{\pre}$ and $E_{\res}$ given by Definition~\ref{D:effective}. Then the following properties hold:
  \begin{enumerate}[(a)]
  \item (Compactness). Suppose $\limsup\limits_{h\to 0}\mathcal E_h(y_h,n_h)<\infty$. Then there exists $(y,R,n)\in\mathcal A$ such that for a subsequence (not relabeled), we have
    \begin{align}\label{eq:comp1}
      (y_h-\fint_\Omega y_h,\nabla_hy_h)\to &(y,R)\qquad\text{in }L^2,\\
      \label{eq:comp2}
      n_h\to &n\qquad\text{in }L^2.
    \end{align}
  \item (Lower bound). Let $(y_h,n_h)\subset H^1(\Omega;\R^3)\times H^1(\Omega_0;\mathbb S^2)$ and $(y,R,n)\in\mathcal A$. Suppose that $(y_h,\nabla_hy_h,n_h)\to(y,R,n)$ strongly in $L^2$. Then
    \begin{equation*}
      \liminf\limits_{h\to 0}\mathcal E_h(y_h,n_h)\geq \mathcal E(y,R,n).
    \end{equation*}
  \item (Upper bound). Let $(y,R,n)\in\mathcal A$. Then there exists a sequence $(y_n,n_h)\subset H^1(\Omega;\R^3)\times H^1(\Omega_0;\mathbb S^2)$ such that $(y_h,\nabla_hy_h,n_h)\to(y,R,n)$ strongly in $L^2$ and
    \begin{equation*}
      \lim\limits_{h\to 0}\mathcal E_h(y_h,n_h)=\mathcal E(y,R,n).
    \end{equation*}
  \end{enumerate}
\end{theorem}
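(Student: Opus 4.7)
The plan is to follow the well-established dimension-reduction scheme of \cite{mora2002derivation}, as extended to prestrained rods in \cite{neukamm2012rigorous, bauer2019derivation} and combined with the LCE-plate methodology of \cite{bartels2022nonlinear}. The three parts (a)--(c) are treated in turn, with the rod-adapted Friesecke--James--M\"uller rigidity estimate as the central analytical tool. Throughout, the new ingredient compared to the purely elastic rod derivation is the careful bookkeeping of the nematic-elastic coupling through the step-length expansion $L_h(n)^{-1/2}=I-\tfrac{h\bar r}{2}(n\otimes n-\tfrac13 I)+O(h^2)$, and the Frank--Oseen term, which requires tracking how $\nabla_h y_h$ degenerates to a rotation.

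For the compactness statement (a), I would first use (W2) and the elementary bound $|L_h(n)^{-1/2}-I|\le Ch$ to transfer $\int_\Omega \dist^2(\nabla_hy_h,\SO 3)\le Ch^2$ to the LCE region as well. Tiling $\omega$ by intervals of length proportional to $h$ and applying the geometric rigidity estimate on each produces $R_h\in H^1(\omega;\R^{3\times 3})$ with values in $\SO 3$, satisfying $\|\nabla_hy_h-R_h\|_{L^2}\le Ch$ and $\|\partial_1R_h\|_{L^2}\le C$. Weak compactness yields $R_h\to R$ in $L^2$ with $R\in H^1(\omega;\SO 3)$, and after normalization $y_h-\fint_\Omega y_h\to y$ with $\partial_1 y=Re_1$. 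For the director, the Frank--Oseen term together with the strong convergence $\nabla_hy_h\to R\in\SO 3$ and the coercivity (W4) of $\det$ gives $\|\nabla_h n_h\|_{L^2(\Omega_0)}\le C$, whence $\|\bar\nabla n_h\|_{L^2}\le Ch$; the $L^2$-limit $n$ therefore depends only on $x_1$ and lies in $H^1(\omega;\mathbb S^2)$.

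For the lower bound (b), I would introduce the rescaled strain $G_h:=h^{-1}(R_h^\top\nabla_h y_h - I)\in L^2(\Omega;\R^{3\times 3})$, which is uniformly bounded by rigidity. Frame indifference (W1), Taylor expansion (W3), and the step-length expansion above imply that on $\Omega_0$ the integrand $h^{-2}W(\bar x, L_h(n_h)^{-1/2}\nabla_h y_h)$ is asymptotically equivalent to $Q\bigl(\bar x,\,G_h^{\sym}+\bar r(\tfrac13 I-\hat n\otimes\hat n)\bigr)$, while on $\Omega\setminus\Omega_0$ it equals $Q(\bar x,G_h^{\sym})$ modulo negligible errors. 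A standard characterization identifies the weak limit of $G_h^{\sym}$ in terms of $A:=R^\top\partial_1 R\in\R^{3\times 3}_{\Skew}$ and a cross-sectional corrector $\beta\in L^2(\omega;H^1(S;\R^3))$; minimizing the resulting integrated quadratic form pointwise over $\beta$ produces precisely $\bar Q(A+\bar r K_{\pre}(\tfrac13 I-\hat n\otimes\hat n))$ plus the residual $\bar r^2 E_{\res}(\tfrac13 I-\hat n\otimes\hat n)$ as in Definition~\ref{D:effective}. The Frank--Oseen contribution passes to the liminf by weak $H^1$-convergence of $n_h$ together with the strong convergence $(\nabla_hy_h)^{-1}\to R^\top$ and $\det\nabla_hy_h\to 1$.

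The upper bound (c) is the main obstacle, since it requires an explicit recovery sequence that simultaneously realizes the cell-problem optimum on both sides of $\partial S_0$ and produces the Frank--Oseen limit exactly. For smooth $(y,R,n)\in\mathcal A$ I would set
\[
  y_h(x):=y(x_1)+h\bigl(x_2 R(x_1)e_2+x_3 R(x_1)e_3\bigr)+h^2 R(x_1)\beta(x),\qquad n_h(x):=n(x_1),
\]
where $\beta$ is the $x_1$-pointwise minimizer of the cell problem defining $\bar Q$, $K_{\pre}$, and $E_{\res}$, with the $\Omega_0$-contribution adjusted to absorb the spontaneous strain $\bar r(\tfrac13 I-\hat n\otimes\hat n)$. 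A direct computation then gives $\nabla_h y_h=R(I+hG_h)$ with $G_h$ converging strongly in $L^2$ to the optimal competitor of (b), so that (W3) and dominated convergence yield the matching elastic energy; the choice $n_h=n(x_1)$ yields $|\nabla_h n_h (\nabla_h y_h)^{-1}|^2 \det \nabla_h y_h\to |\partial_1 n|^2$ at once. The delicate points are (i) finiteness under (W4), which requires $\det\nabla_h y_h>0$ and holds for $h$ small by smoothness of $\beta$, (ii) the $H^1$-regularity of $\beta$ across $\partial S_0$, which follows from the explicit form of the cell-problem minimizer, and (iii) removal of the smoothness assumption on $(y,R,n)$ via a density argument in $\mathcal A$ and the standard $\Gamma$-lim-sup diagonalization, as executed in detail in \cite{bartels2022nonlinear}.
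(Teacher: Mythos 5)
Your overall plan matches the paper's: rigidity for compactness, Taylor expansion plus the step-length expansion plus a cell-problem relaxation for the lower bound, and an explicit ansatz plus smoothing/diagonalization for the upper bound. However, there are two concrete issues, one of which I would call a genuine gap.

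First, the recovery sequence in part~(c) is missing the axial stretch corrector. The relaxation space $\Hrel$ appearing in Lemma~\ref{L:relax} consists of fields of the form $a\,e_1\otimes e_1+\sym(0,\bar\nabla\varphi)$ with $a\in\R$ \emph{and} $\varphi\in H^1(S;\R^3)$. Your ansatz
\begin{equation*}
  y_h(x)=y(x_1)+h\bigl(x_2 R(x_1)e_2+x_3 R(x_1)e_3\bigr)+h^2 R(x_1)\beta(x)
\end{equation*}
produces, in the scaled strain $h^{-1}(R^\top\nabla_h y_h-I)$, only the bending/twist term $\sym((R^\top\partial_1 R\,\barx)\otimes e_1)$ and the cross-sectional part $\sym(0,\bar\nabla\beta)$; the in-plane correction enters at order $h$ via $h^{-1}\bar\nabla(h^2R\beta)$, but there is no mechanism to generate an $a(x_1)\,e_1\otimes e_1$ contribution at order $h$. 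In the pure bending-twist setting with a symmetric cross-section the optimal $a$ happens to vanish, but in the bi-rod problem the inhomogeneous prestrain term $\tfrac{\bar r}{2}\boldsymbol 1_{S_0}U$ breaks the symmetry and the minimizing $a$ is generically nonzero. A recovery sequence that cannot realize $a$ only attains the restricted infimum over $\{\sym(0,\bar\nabla\varphi)\}$, which is strictly larger than $\bar Q(K+\bar r K_{\pre}(U))+\bar r^2 E_{\res}(U)$, so the limsup inequality fails. The paper fixes this by inserting the tangential stretch term $h\int_0^{x_1}a(s)R(s)e_1\,ds$ into the deformation; you need an analogous term.

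Second, in part~(a)/(b) your a priori bound ``$\|\nabla_h n_h\|_{L^2(\Omega_0)}\le C$'' is not available under (W4). From the pulled-back Frank--Oseen term and the growth condition (W4) one only gets a uniform bound on $\nabla_h n_h$ in $L^p(\Omega_0)$ with $p=\tfrac{2q}{q+4}<2$ (because one must trade off the $L^q$-bound on $\nabla_h y_h$ and the $L^{q/2}$-bound on $\det(\nabla_h y_h)^{-1}$). Consequently your statement ``weak $H^1$-convergence of $n_h$'' and the implicit strong $L^2$-convergences of $(\nabla_h y_h)^{-1}$ and $\det\nabla_h y_h$ are not justified as stated; the correct route is to show $\nabla_h n_h\,(\nabla_h y_h)^{-1}(\det\nabla_h y_h)^{1/2}\wto(\partial_1 n,d)R^\top$ weakly in $L^p$ (using a.e.~convergence of $\nabla_h y_h$) and then use $L^2$-lower semicontinuity of the quadratic integrand, as the paper does. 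Finally, a small bookkeeping remark: you use the correct expansion $L_h(n)^{-1/2}=I-\tfrac{h\bar r}{2}(n\otimes n-\tfrac13 I)+O(h^2)$, but later drop the factor $\tfrac12$ when writing the asymptotic integrand $Q(\bar x,G_h^{\sym}+\bar r(\tfrac13 I-\hat n\otimes\hat n))$; this factor is already absorbed into the definition of $K_{\pre}$ and $E_{\res}$ via the $\tfrac12\boldsymbol 1_{S_0}U$ in Definition~\ref{D:effective}, so it must be retained consistently.
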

For the proof see Section~\ref{S:proof}. 
\medskip

We can take also clamped boundary conditions for the deformation into account:
\begin{proposition}[Clamped boundary conditions]\label{P1}
  Let $y_{bc}\in H^2(\omega;\R^3)$, $R_{bc}\in H^1(\omega;\SO 3)$, and suppose that $\partial_1y_{bc}=R_{bc}e_1$.
  \begin{enumerate}[(a)]
  \item Consider the situation of Theorem~\ref{T1} (a) and additionally suppose that
    \begin{equation}\label{eq:bc3d}
      y_h(0,\bar x)=y_{bc}(0)+hR_{bc}(0)\barx\qquad\text{a.e.~in~}S,
    \end{equation}
    where here and below, we write $\barx$ for the map $S\ni\bar x\mapsto (0,\bar x)^\top\in\R^3$.
    Then there exists $(y,R,n)\in\mathcal A$ with
    \begin{equation}\label{eq:bc1d}
      (y(0),R(0))=(y_{bc}(0),R_{bc}(0)),
    \end{equation}
    such that for a subsequence (not relabeled), we have
    \begin{align}\label{eq:compbc}
      (y_h,\nabla_hy_h,n_h)\to &(y,R,n)\qquad\text{in }L^2.
    \end{align}
  \item Consider the situation of Theorem~\ref{T1}  (c) and suppose that $(y,R,n)\in\mathcal A$ satisfies \eqref{eq:bc1d}. Then there exists a sequence $(y_n,n_h)\subset H^1(\Omega;\R^3)\times H^1(\Omega_0;\mathbb S^2)$ that satisfies \eqref{eq:bc3d} such that $(y_h,\nabla_hy_h,n_h)\to(y,R,n)$ strongly in $L^2$ and
    \begin{equation*}
      \lim\limits_{h\to 0}\mathcal E_h(y_h,n_h)=\mathcal E(y,R,n).
    \end{equation*}
  \end{enumerate}
\end{proposition}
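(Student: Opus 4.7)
Part (a). The plan is to first invoke Theorem~\ref{T1}(a) to extract a subsequence and limits $(y,R,n)\in\mathcal A$ satisfying \eqref{eq:comp1}--\eqref{eq:comp2}, and then to verify \eqref{eq:bc1d} by passing traces of the clamp to the limit. The compactness argument rests on geometric rigidity, which produces an approximate rotation field $R_h\in H^1(\omega;\SO 3)$ with $R_h\rightharpoonup R$ weakly in $H^1$ and $\|\nabla_h y_h-R_h\|_{L^2(\Omega)}\to 0$; as $\omega=(0,L)$ is one-dimensional, $R_h(0)\to R(0)$, and similarly the trace of $y_h$ at $x_1=0$ converges in $L^2(S)$ thanks to the uniform $H^1(\Omega)$-bound on $y_h$ (after centering).

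To extract \eqref{eq:bc1d} from \eqref{eq:bc3d}, I would average over the cross-section: using \eqref{ass:S}, $\frac{1}{|S|}\int_S y_h(0,\bar x)\,d\bar x=y_{bc}(0)$ exactly, and moreover $\tfrac1h\bar\nabla y_h(0,\bar x)=(R_{bc}(0)e_2\mid R_{bc}(0)e_3)$ is independent of $h$ and $\bar x$. The first identity passes to $y(0)=y_{bc}(0)$ in the limit. The second, combined with $\|\nabla_h y_h-R_h\|_{L^2}\to 0$ and the trace convergence of $R_h$ at the endpoint, forces the second and third columns of $R(0)$ to agree with those of $R_{bc}(0)$. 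Since both matrices lie in $\SO 3$, the first columns must also coincide via $R(0)e_1=R(0)e_2\times R(0)e_3$, completing \eqref{eq:bc1d}.

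Part (b). The plan is to modify the recovery sequence $(\tilde y_h,n_h)$ produced by Theorem~\ref{T1}(c) by a boundary cutoff. The construction in Section~\ref{S:proof} (compare the plate case in \cite{bartels2022nonlinear}) yields the explicit additive form
\[
  \tilde y_h(x_1,\bar x)=y(x_1)+hR(x_1)\barx+h^2\Phi_h(x_1,\bar x),
\]
with a correction $\Phi_h$ that is uniformly bounded in $L^\infty\cap H^1$ and that enters the energy only at order $h^2$. Because \eqref{eq:bc1d} holds, the leading-order part already matches \eqref{eq:bc3d} exactly at $x_1=0$, so only $h^2\Phi_h$ needs to be removed near the endpoint. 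Choose $\eta_h\in C^\infty([0,L])$ with $\eta_h\equiv 0$ on $[0,\delta_h]$, $\eta_h\equiv 1$ on $[2\delta_h,L]$, and $|\eta_h'|\le C/\delta_h$, take $\delta_h=h$, and set
\[
  y_h(x_1,\bar x):=y(x_1)+hR(x_1)\barx+h^2\eta_h(x_1)\Phi_h(x_1,\bar x),
\]
keeping $n_h$ unchanged (no BC is imposed on the director). Then \eqref{eq:bc3d} is satisfied exactly and $(y_h,\nabla_h y_h,n_h)\to(y,R,n)$ in $L^2$.

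It remains to check that the modification is energetically negligible. The extra contribution splits into (i) the $\eta_h'$-term, $\|h^2\eta_h'\Phi_h\|_{L^2(\Omega)}^2\le Ch^4/\delta_h$, yielding $O(h^2/\delta_h)=O(h)$ after the $h^{-2}$-rescaling in \eqref{eq:def:3d}; and (ii) replacing $\Phi_h$ by the cut-off correction on the strip $[0,2\delta_h]\times S$, whose energy density remains uniformly bounded and contributes $O(\delta_h)=O(h)$. Both vanish as $h\to 0$, hence $\mathcal E_h(y_h,n_h)\to\mathcal E(y,R,n)$. The main obstacle is precisely this last step in (b): it forces one to exploit the explicit additive structure of the recovery sequence and to verify uniform bounds on $\Phi_h$, which is possible only because the upper-bound proof in Section~\ref{S:proof} is fully constructive; part (a) is largely routine once the $H^1$-control on the approximating rotation field from the rigidity step is in hand.
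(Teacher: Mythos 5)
Your overall plan for part (a) is plausible but leaves a genuine gap, and your plan for part (b) rests on an incorrect premise. In more detail:

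\emph{Part (a).} The identification $y(0)=y_{bc}(0)$ by averaging \eqref{eq:bc3d} over $S$ and passing to the limit via the trace theorem is fine: $y_h$ is uniformly bounded in $H^1(\Omega)$ (after centering), so the trace $y_h(0,\cdot)$ converges in $L^2(S)$. The problem is the second part. From geometric rigidity you obtain $R_h\in H^1(\omega;\SO 3)$ with $R_h\to R$ uniformly and $\|\nabla_h y_h - R_h\|_{L^2(\Omega)}\to 0$. The latter is a \emph{bulk} estimate, and $\nabla_h y_h$ is not uniformly bounded in $H^1(\Omega)$, so you cannot conclude that the trace of $\nabla_h y_h$ at $x_1=0$ (which by \eqref{eq:bc3d} has columns $2,3$ equal to $R_{bc}(0)(e_2\mid e_3)$) is close to $R_h(0)$. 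The correct step requires an additional boundary-layer argument --- e.g., applying the rigidity estimate on $(0,\delta)\times S$ for $\delta\to 0$ together with a Poincar\'e estimate along the axis, or directly invoking the clamped-boundary version of \cite[Proposition~5.1]{bauer2019derivation}, which is what the paper does.

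\emph{Part (b).} Here the paper proceeds differently, and its route is both simpler and sidesteps the difficulty in your argument. The recovery sequence constructed in Section~\ref{S:proof:UB} is
\[
  y_h^k(x)=y^k(x_1)+hR^k(x_1)\barx+h\int_0^{x_1}a^k(s)R^k(s)e_1\,ds+h^2R^k(x_1)\varphi^k(x),
\]
where $y^k,R^k$ come from \cite[Lemma~2.3]{neukamm2012rigorous} and thus satisfy the one-sided boundary condition $y^k(0)=y(0)$, $R^k(0)=R(0)$, while $a^k\in C_c^\infty(\omega)$ and $\varphi^k\in C_c^\infty(\omega;C^\infty(\bar S;\R^3))$ are \emph{compactly supported in the interior} of $\omega$. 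Consequently the last two summands vanish identically near $x_1=0$ and \eqref{eq:bc3d} holds exactly, with no cutoff whatsoever. Your ansatz $\tilde y_h = y + hR\barx + h^2\Phi_h$ with $\Phi_h$ uniformly bounded is not what the paper's construction gives: there is an $O(h)$ term $h\int_0^{x_1}a^kR^ke_1\,ds$ that is not of the form $hR\barx$, and if you absorb it into $h^2\Phi_h$, then $\Phi_h$ is of order $1/h$ and your energy estimate for the $\eta_h'$-term becomes $O(h^{-1/2})$, which diverges. The cutoff could of course be rescued by noting that this $O(h)$ correction also vanishes identically near the endpoint; but once you exploit that fact you discover that the entire cutoff construction is unnecessary, which is the paper's point.
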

For the proof see Section~\ref{S:proof}. 
\medskip
Next, we discuss soft anchoring conditions for the director $n_h$.
They come in the form of an additional contribution to the energy functional that penalizes deviations of the director $n_h$ from a prescribed configuration $\hat n_{bc}$.
In this context, it is natural to describe the director in local coordinates. 
In the following we first introduce  a general form of a soft anchoring condition for the three-dimensional model: Let $|\cdot|_a$ denote a semi-norm on $\R^3$, and let $\rho\in L^1(\Omega_0)$ denote a non-negative weight and set $\bar\rho(x_1):=\int_{S_0}\rho(x_1,\bar x)\,d\bar x$. For $(y_h,n_h)$ with $\mathcal E_h(y_h,n_h)<\infty$ and $\hat n_{bc}\in H^1(\omega;\mathbb S^2)$ define
\begin{equation*}
  \mathcal G_h(y_h,n_h;\hat n_{bc}):=\int_{\Omega_0}\left|\frac{\nabla_hy_h^\top n_h}{|\nabla_hy_h^\top n_h|}-\hat n_{bc}  \right|_a^2\rho\,dx.
\end{equation*}
Furthermore, for $(y,R,n)\in\mathcal A$ and $\hat n_{bc}\in H^1(\omega;\mathbb S^2)$ define
\begin{align*}
  \mathcal G_{\rm weak}(y,R,n;\hat n_{bc}):=&\int_\omega\left|R^\top n-\hat n_{bc}  \right|_a^2\bar\rho\,dx_1,\\
  \mathcal G_{\rm strong}(y,R,n;\hat n_{bc}):=&
                                                \begin{cases}
                                                  0&\text{if }\left|R^\top n-\hat n_{bc}  \right|_a^2\bar\rho=0\text{ a.e. in }\omega,\\
                                                  \infty&\text{else.}
                                                \end{cases}
\end{align*}

\begin{proposition}[Derivation of anchorings]\label{P2}
  \begin{enumerate}[(a)]
  \item The statements of Theorem~\ref{T1} hold with $(\mathcal E_h,\mathcal E)$ being replaced by $(\mathcal E_h + \mathcal G_h , \mathcal E + \mathcal G_{\rm weak} )$.
  \item Let $0<\beta<1$. The statements of Theorem~\ref{T1} hold with $(\mathcal E_h,\mathcal E)$ being replaced by $(\mathcal E_h + h^{-\beta}\mathcal G_h , \mathcal E + \mathcal G_{\rm strong} )$.
  \end{enumerate}
\end{proposition}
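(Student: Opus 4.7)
The key observation is that the anchoring functional $\mathcal{G}_h$ is continuous with respect to the compactness topology used in Theorem~\ref{T1}, so the proof amounts to combining Theorem~\ref{T1} with a separate analysis of the $\mathcal{G}_h$ contribution. I would treat parts (a) and (b) in parallel by first establishing lower semi-continuity of $\mathcal{G}_h$ along the convergence \eqref{eq:comp1}--\eqref{eq:comp2}, and then a recovery property along the recovery sequence of Theorem~\ref{T1}(c).

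For the lower bound in (a), I would extract from the $L^2$-convergences $(y_h-\fint_\Omega y_h,\nabla_h y_h, n_h)\to (y,R,n)$ a further subsequence converging almost everywhere. Since $|n_h|=1$ and $\det(\nabla_h y_h)>0$ almost everywhere (which follows from $\mathcal{E}_h(y_h,n_h)<\infty$ and (W4)), the normalized quantity
\[
  m_h := \frac{\nabla_h y_h^\top n_h}{|\nabla_h y_h^\top n_h|}
\]
is well-defined a.e.~and converges a.e.~to $R^\top n/|R^\top n|=R^\top n$, where the last equality uses $R(x_1)\in\SO 3$ and $|n(x_1)|=1$. Fatou's lemma applied to the nonnegative integrand $|m_h-\hat n_{bc}|_a^2\rho$ then yields
\[
  \liminf_{h\to 0}\mathcal{G}_h(y_h,n_h;\hat n_{bc}) \geq \int_{\Omega_0}|R^\top n-\hat n_{bc}|_a^2\rho\,dx = \int_\omega |R^\top n-\hat n_{bc}|_a^2\bar\rho\,dx_1 = \mathcal{G}_{\rm weak}(y,R,n;\hat n_{bc}),
\]
where I used that $R$ and $n$ do not depend on $\bar x$. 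Compactness is immediate from $\mathcal{G}_h\geq 0$. For the upper bound I would take the recovery sequence $(y_h,n_h)$ from Theorem~\ref{T1}(c); its explicit construction provides uniform-in-$h$ control on $\nabla_h y_h$ and $n_h$, so that $m_h\to R^\top n$ strongly in $L^2(\Omega_0)$ and dominated convergence gives $\mathcal{G}_h\to \mathcal{G}_{\rm weak}$.

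For part (b), compactness and the lower bound proceed in parallel: along a subsequence realizing a finite $\liminf$ of $\mathcal{E}_h+h^{-\beta}\mathcal{G}_h$, both $\mathcal{E}_h$ and $h^{-\beta}\mathcal{G}_h$ are uniformly bounded, so the limit $(y,R,n)\in\mathcal{A}$ exists by Theorem~\ref{T1}(a), and $\mathcal{G}_h(y_h,n_h)\leq C h^\beta\to 0$. The semi-continuity argument from (a) then forces $\mathcal{G}_{\rm weak}(y,R,n;\hat n_{bc})=0$, i.e., $R^\top n=\hat n_{bc}$ where $\bar\rho>0$, which is precisely $\mathcal{G}_{\rm strong}(y,R,n;\hat n_{bc})=0$. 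Combined with $\liminf\mathcal{E}_h\geq\mathcal{E}(y,R,n)$ from Theorem~\ref{T1}(b), this gives the lower bound. For the upper bound, given $(y,R,n)\in\mathcal{A}$ with $\mathcal{G}_{\rm strong}=0$, I would invoke the recovery sequence from Theorem~\ref{T1}(c) and verify $\mathcal{G}_h(y_h,n_h)=o(h^{\beta})$.

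The main obstacle will be this last quantitative estimate. The recovery sequence is typically of the form $n_h(x)=n(x_1)+h\,\tilde n(x_1,\bar x)+o(h)$ and $\nabla_h y_h=R+O(h)$ in a pointwise-in-the-interior sense, which gives $m_h-R^\top n=O(h)$ and hence $\mathcal{G}_h=O(h^2)$ when $R^\top n=\hat n_{bc}$ on the support of $\bar\rho$; since $\beta<1$, this suffices. The margin between the assumed $\beta<1$ and the expected $\beta<2$ is needed to absorb contributions from the boundary-layer modifications in the construction of Theorem~\ref{T1}(c). The remainder of the argument is bookkeeping that parallels the analogous analysis for LCE plates in \cite{bartels2022nonlinear}.
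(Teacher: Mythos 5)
Your proposal matches the paper's strategy closely: compactness from non-negativity of $\mathcal G_h$, the lower bound by combining Theorem~\ref{T1}~(b) with pointwise a.e.~convergence of $m_h$ (you via Fatou, the paper via the weak-$L^2$ convergence of $|m_h-\hat n_{bc}|_a\sqrt\rho$ together with convexity, both of which work), and the upper bound by exploiting that the recovery sequence from Section~\ref{S:proof:UB} satisfies the strengthened estimate \eqref{eq:diagonal}, which gives $h^{-\beta}\|m_h-R^\top n\|_{L^\infty}\to 0$. The one thing to make precise is that what you call ``uniform-in-$h$ control from the explicit construction'' is exactly the content of \eqref{eq:diagonal} rather than of the bare statement of Theorem~\ref{T1}~(c); once this is cited, your argument for part~(b) (that $\mathcal G_h\leq C\|m_h-R^\top n\|_{L^\infty}^2=o(h^{2\beta})$ when $\mathcal G_{\rm strong}=0$) closes the gap, and in fact the paper only uses the weaker bound $\mathcal G_h\leq C\|m_h-R^\top n\|_{L^\infty}=o(h^\beta)$.
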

For the proof see Section~\ref{S:proof}. 
\medskip

\begin{example}
  \begin{enumerate}[(a)]
  \item (Full anchoring). In the case  $|\cdot|_a:=|\cdot|$ and $\rho\equiv |S_0|^{-1}$ we obtain
    \begin{align*}
      \mathcal G_{\rm weak}(y,R,n;\hat n_{bc})=\int_\omega\left|R^\top n-\hat n_{bc}  \right|^2\,dx_1,
    \end{align*}
    which penalizes deviations of the director from $\hat n_{bc}$. The corresponding strong anchoring enforces the director (in local coordinates) to be equal to $\hat n_{bc}$ a.e.~in $\omega$.
  \item (Tangentiallity). Consider  $|\xi|_a^2:=(\xi\cdot e_2)^2+(\xi\cdot e_3)^2$, $\rho\equiv |S_0|^{-1}$ and $\hat n_{bc}:=e_1$. We obtain
    \begin{align*}
      \mathcal G_{\rm weak}(y,R,n;\hat n_{bc})=\int_\omega(n\cdot Re_2)^2+(n\cdot Re_3)^2\,dx_1,
    \end{align*}
    which penalizes the non-tangential components of the director. The corresponding strong anchoring enforces the director (in local coordinates) to be tangential, i.e., $n=y'$.
  \item (Normality). Consider  $|\xi|_a^2:=(\xi\cdot e_1)^2$, $\rho\equiv |S_0|^{-1}$ and $\hat n_{bc}:=e_2$. We obtain
    \begin{align*}
      \mathcal G_{\rm weak}(y,R,n;\hat n_{bc})=\int_\omega(n\cdot y')^2\,dx_1,
    \end{align*}
    which penalizes the tangential components of the director. The corresponding strong anchoring enforces the director (in local coordinates) to be normal to the tangent.
  \end{enumerate}
\end{example}

\subsection{Definition and evaluation of the effective coefficients}\label{S:4}
In the following we present the definition of the effective coefficients $\bar Q$, $K_{\rm eff}$ and $E_{\res}$. We first define the effective coefficients abstractly based on a projection scheme introduced in \cite{bauer2019derivation}. We then characterize the coefficients with help of cell-problems and correctors. Finally, we derive more explicit formulas in the special case of an isotropic material. Throughout this section we assume that $W$ satisfies (W1) -- (W4) and that $Q$ is defined by \eqref{eq:def:Q}.
\smallskip

For the abstract definition let $\mathbb H:=L^2(S;\R^{3\times 3}_{\sym})$ denote the Hilbert space with scalar product
\begin{equation*}
  \big(\Psi,\Phi\big)_Q:=\frac12\fint_S\mathbb L(\bar x)\Psi(\bar x)\cdot \Phi(\bar x)\,d\bar x,
\end{equation*}
where $\frac12\mathbb L(\bar x)G\cdot G'=\frac12\nabla^2 W(\bar x,I)G\cdot G'$ for all $G,G'\in\R^{3\times 3}$. Note that the associated norm is given by $\|\Psi\|_Q^2=\fint_SQ(\bar x,\Psi(\bar x))\,d\bar x$. 
We consider the subspaces
\begin{align*}
  \Hrel:=&\Big\{S\ni\bar x\mapsto ae_1\otimes e_1+\sym(0,\bar\nabla\varphi(\bar x))\,:\,a\in\R,\,\varphi\in H^1(S;\R^3)\,\Big\},\\
  \Hmicro:=&\Big\{S\ni\bar x\mapsto\sym((K\barx)\otimes e_1)+\chi(\bar x)\,:\,K\in\R^{3\times 3}_{\Skew},\,\chi\in{\Hrel}\,\Big\}.
\end{align*}
With help of Korn's inequality we deduce the following statement (whose elementary proof we leave to the reader):
\begin{lemma}\label{L:Hrelrep}
  Let $H^1_{\rm av}(S;\R^3)$ denote the space of functions $\varphi\in H^1(S;\R^3)$ satisfying
  \begin{equation*}
    \label{eq:zeroav2}
    \fint_S\varphi=0\text{ and }\fint_S\partial_3\varphi_2-\partial_2\varphi_3\,d\bar x=0.
  \end{equation*}
  Then $H^1_{\rm av}(S;\R^3)$ equipped with the norm $\varphi\mapsto \left(\fint_S|\sym(0,\bar\nabla\varphi)|^2\,d\bar x\right)^\frac12$ is a Hilbert space. Furthermore, the map
  \begin{equation*}
    \R\times H^1_{\rm av}(S;\R^3)\ni(a,\varphi)\mapsto \sym\big(ae_1,\bar\nabla\varphi\big)\in\Hrel
  \end{equation*}
  is an isomorphism.
\end{lemma}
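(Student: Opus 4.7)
The plan is to first identify the kernel of $\varphi\mapsto\sym(0,\bar\nabla\varphi)$, then invoke the two-dimensional Korn inequality to promote the stated seminorm to a genuine norm equivalent to the $H^1$-norm on $H^1_{\rm av}(S;\R^3)$, and finally verify that the four affine conditions defining $H^1_{\rm av}$ select a unique representative in each coset of that kernel.

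For the kernel, setting $\sym(0,\bar\nabla\varphi)=0$ forces $\bar\nabla\varphi_1=0$, $\partial_2\varphi_2=\partial_3\varphi_3=0$, and $\partial_3\varphi_2+\partial_2\varphi_3=0$. Hence $\varphi_1$ is constant and $(\varphi_2,\varphi_3)$ is an infinitesimal planar rigid motion; the kernel is the four-dimensional space $\{(c_1,\,c_2-\omega x_3,\,c_3+\omega x_2):c_1,c_2,c_3,\omega\in\R\}$. Using the symmetry assumptions $\int_S x_2 = \int_S x_3 = 0$ from \eqref{ass:S}, the conditions $\fint_S\varphi=0$ give $c_1=c_2=c_3=0$, while $\fint_S(\partial_3\varphi_2-\partial_2\varphi_3)=0$ gives $\omega=0$. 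Thus the kernel meets $H^1_{\rm av}(S;\R^3)$ only at zero, and conversely every element of $H^1(S;\R^3)$ can be written uniquely modulo this kernel as an element of $H^1_{\rm av}(S;\R^3)$.

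Next, I would apply the classical Korn second inequality on the Lipschitz domain $S$ to $(\varphi_2,\varphi_3)\in H^1(S;\R^2)$, combined with the Poincar\'e inequality after subtracting the infinitesimal rigid motion (which is eliminated by the three constraints on $\fint_S\varphi_2,\fint_S\varphi_3$ and $\fint_S(\partial_3\varphi_2-\partial_2\varphi_3)$). For $\varphi_1$, the off-diagonal entries of $\sym(0,\bar\nabla\varphi)$ directly control $\bar\nabla\varphi_1$, and the condition $\fint_S\varphi_1=0$ together with Poincar\'e bounds $\|\varphi_1\|_{L^2}$. This gives a constant $C=C(S)$ with $\|\varphi\|_{H^1}\leq C\|\sym(0,\bar\nabla\varphi)\|_{L^2}$ on $H^1_{\rm av}(S;\R^3)$, so the stated seminorm is a norm equivalent to the $H^1$-norm. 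Since $H^1_{\rm av}$ is clearly closed in $H^1(S;\R^3)$, it is complete, and the inner product obtained by polarization turns it into a Hilbert space. The main technical ingredient here is this Korn--Poincar\'e step; the rest is bookkeeping that exploits \eqref{ass:S}.

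Finally, for the isomorphism $(a,\varphi)\mapsto a e_1\otimes e_1+\sym(0,\bar\nabla\varphi)$, linearity and continuity are immediate from the definition of the norms on both sides. Injectivity follows because a vanishing image yields $a=0$ from the $(1,1)$-entry and then $\varphi$ lies in the kernel, hence vanishes. Surjectivity: any element of $\Hrel$ has the form $ae_1\otimes e_1+\sym(0,\bar\nabla\tilde\varphi)$ for some $\tilde\varphi\in H^1(S;\R^3)$, and subtracting from $\tilde\varphi$ the unique kernel element that makes all four functionals in the definition of $H^1_{\rm av}$ vanish (possible by the nondegeneracy shown above) yields the required preimage, with unchanged symmetric gradient. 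Continuity of the inverse then follows either from the open mapping theorem or directly from the Korn-based norm equivalence.
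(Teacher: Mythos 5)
Your proof is correct and follows exactly the route the paper intends: it explicitly signals ``With help of Korn's inequality'' and omits the elementary proof, and your argument (identifying the four-dimensional kernel of $\varphi\mapsto\sym(0,\bar\nabla\varphi)$, checking that the four normalization conditions annihilate it, and invoking the two-dimensional Korn--Poincar\'e inequality to obtain $\|\varphi\|_{H^1}\le C\|\sym(0,\bar\nabla\varphi)\|_{L^2}$ on $H^1_{\rm av}$, whence completeness and a bounded inverse) is precisely that proof. One small remark: the linear system on the kernel is triangular (the trace condition alone forces $\omega=0$, after which $\fint_S\varphi=0$ forces $c_1=c_2=c_3=0$), so the centroid conditions $\int_S x_2=\int_S x_3=0$ are not actually needed for the nondegeneracy step, though using them does no harm.
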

The previous lemma implies that $\Hrel$ and $\Hmicro$ are closed subspaces of $\mathbb H$. Thus, the subspaces $\mathbb H_{\rm res}$ and $\Hmacro$ defined by the $(\cdot,\cdot)_Q$-orthogonal decompositions
\begin{equation*}
  \mathbb H=\Hmicro\oplus\mathbb H_{\rm res},\qquad \Hmicro=\Hmacro\oplus{\Hrel},
\end{equation*}
are closed as well.
In the following, we write $P_X$ for the orthogonal projection onto a closed subspace $X\subset\mathbb H$. 

We recall the following result, which is a special case of \cite[Lemma~2.10]{bauer2019derivation}:
\begin{lemma}
  The map
  \begin{equation*}
    \mathbf E:\R^{3\times 3}_{\Skew}\to   \Hmacro,\qquad \mathbf E(K):=P_{\Hmacro}\Big(\sym\big((K\barx)\otimes e_1\big)\Big)
  \end{equation*}
  is a linear isomorphism. 
\end{lemma}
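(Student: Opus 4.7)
The strategy is to verify linearity, injectivity, and surjectivity separately, exploiting the definitions $\Hmicro=\Hmacro\oplus\Hrel$ and $\Hmicro=\{\sym((K\barx)\otimes e_1)+\chi:K\in\R^{3\times3}_{\Skew},\chi\in\Hrel\}$. Linearity is immediate: the assignment $K\mapsto\sym((K\barx)\otimes e_1)$ is linear in $K$ and $P_{\Hmacro}$ is linear. It therefore suffices to establish bijectivity.

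For surjectivity I would argue as follows. Fix $\phi\in\Hmacro$. Since $\Hmacro\subset\Hmicro$, the definition of $\Hmicro$ yields $K\in\R^{3\times3}_{\Skew}$ and $\chi\in\Hrel$ with $\phi=\sym((K\barx)\otimes e_1)+\chi$. Applying $P_{\Hmacro}$ to both sides and using that $P_{\Hmacro}$ is the identity on $\Hmacro$ and vanishes on $\Hrel$ (because $\Hrel\perp_Q\Hmacro$ by definition of $\Hmacro$ as the $(\cdot,\cdot)_Q$-orthogonal complement of $\Hrel$ in $\Hmicro$), I get $\phi=P_{\Hmacro}(\sym((K\barx)\otimes e_1))+0=\mathbf E(K)$, which gives surjectivity.

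For injectivity, assume $\mathbf E(K)=0$. Then $\sym((K\barx)\otimes e_1)\in\Hmicro$ has vanishing $\Hmacro$-component, so it belongs to $\Hrel$. Parametrising $K$ by the axial vector $k=(k_1,k_2,k_3)\in\R^3$ via $Kv=k\times v$, one computes $K\barx=(k_2x_3-k_3x_2,\,-k_1x_3,\,k_1x_2)^\top$, so that $\sym((K\barx)\otimes e_1)$ has entries $(1,1)=k_2x_3-k_3x_2$, $(1,2)=-\tfrac12 k_1x_3$, $(1,3)=\tfrac12 k_1x_2$, and zero elsewhere. Comparing with $ae_1\otimes e_1+\sym(0,\bar\nabla\varphi)$: the $(1,1)$-component gives $k_2x_3-k_3x_2=a$ a.e., forcing $k_2=k_3=0$ and $a=0$; the $(1,2)$- and $(1,3)$-components give $\partial_2\varphi_1=-k_1x_3$ and $\partial_3\varphi_1=k_1x_2$ in $L^2(S)$. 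Taking distributional derivatives of these identities yields $\partial_3\partial_2\varphi_1=-k_1$ and $\partial_2\partial_3\varphi_1=k_1$; the commutation of mixed distributional partials for $\varphi_1\in H^1(S)$ forces $k_1=0$, hence $K=0$. Equivalently, one may observe that the planar vector field $(-k_1x_3,k_1x_2)$ is curl-free only when $k_1=0$, and only curl-free fields are gradients.

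The only delicate point is the injectivity step, where one must rule out that the infinitesimal-rotation pattern $(K\barx)\otimes e_1$ is already of the form admitted by $\Hrel$; this is a quick computation but relies crucially on the structural restriction that the first column of the matrix $(0,\bar\nabla\varphi)$ is zero, so that no $\bar x$-linear term in the first row of $\sym((K\barx)\otimes e_1)$ can be cancelled by $\bar\nabla\varphi$ except through a genuine gradient, which excludes the rotational ansatz. No use of Lemma~\ref{L:Hrelrep} beyond the representation of $\Hrel$ is needed.
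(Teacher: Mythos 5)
Your proof is correct. Note, though, that the paper itself offers no argument for this statement: it simply cites it as a special case of Lemma~2.10 in the reference \texttt{bauer2019derivation}, so there is no in-text proof to compare against. Your self-contained argument supplies exactly what is needed and is structured the right way: surjectivity is automatic once one notices that $\Hmacro$ is, by construction, the $(\cdot,\cdot)_Q$-orthogonal complement of $\Hrel$ inside $\Hmicro$, so $P_{\Hmacro}$ kills the $\Hrel$-part of any $\Hmicro$-representation; and injectivity reduces to showing $\sym((K\barx)\otimes e_1)\in\Hrel \Rightarrow K=0$. The explicit entrywise comparison is right: the $(1,1)$-entry forces $k_2=k_3=0$ (and $a=0$) because $1,x_2,x_3$ are linearly independent in $L^2(S)$ for a domain of positive measure, and the $(1,2)$-, $(1,3)$-entries force $\bar\nabla\varphi_1=(-k_1x_3,\,k_1x_2)$, which is a gradient only if its curl $2k_1$ vanishes. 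This is the same Korn-type rigidity that underlies the cited lemma, phrased at the most elementary level. No gaps.
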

We are now in position to define the effective coefficients as follows:
\begin{definition}[Effective coefficients]\label{D:effective}
  \begin{align*}
    &\bar Q:\R^{3\times 3}_{\Skew}\to\R,\qquad \bar Q(K):=\|\mathbf E(K)\|_{Q}^2,\\
    &K_{\pre}:\R^{3\times 3}_{\dev}\to\R^{3\times 3}_{\Skew},\qquad      K_{\pre}(U):=(\mathbf E^{-1}\circ P_{\Hmacro})\Big(\tfrac12  \boldsymbol{1}_{S_0} U\Big),\\
    &E_{\res}:\R^{3\times 3}_{\dev}\to\R,\qquad      E_{\res}(U):=\|P_{\Hres}\Big(\tfrac12  \boldsymbol{1}_{S_0} U\Big)\|^2_Q.
  \end{align*}
\end{definition}
The definition is motivated by the following relaxation result:
\begin{lemma}[Relaxation formula]\label{L:relax}
  Let $K\in\R^{3\times 3}_{\Skew}$ and $U\in \R^{3\times 3}_{\dev}$. Then
  \begin{align*}
    &\inf_{\chi\in\Hrel}\fint_S Q\Big(\bar x,\sym\big((K\barx)\otimes e_1\big)+\tfrac{\bar r}{2}  \boldsymbol{1}_{S_0} U+\chi\Big)\\
    &\qquad =\bar Q(K+\bar r K_{\pre}(U))+\bar r^2E_{\res}(U).
  \end{align*}
\end{lemma}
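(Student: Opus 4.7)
The plan is to exploit the orthogonal decomposition $\mathbb H=\Hmacro\oplus\Hrel\oplus\Hres$ provided just above Definition~\ref{D:effective} and simply compute three projections.

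First I would write the integrand in the compact Hilbert-space form. Setting
\[
\Psi_0:=\sym\big((K\barx)\otimes e_1\big)+\tfrac{\bar r}{2}\boldsymbol 1_{S_0}U\in\mathbb H,
\]
the left-hand side of the claimed identity is $\inf_{\chi\in\Hrel}\|\Psi_0+\chi\|_Q^2$. Since $\Hrel$ is a closed subspace of $\mathbb H$ (by Lemma~\ref{L:Hrelrep}), the orthogonal projection theorem gives
\[
\min_{\chi\in\Hrel}\|\Psi_0+\chi\|_Q^2=\|P_{\Hrel^\perp}\Psi_0\|_Q^2,
\]
attained at $\chi=-P_{\Hrel}\Psi_0$. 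Because $\Hrel^\perp=\Hmacro\oplus\Hres$ with $(\cdot,\cdot)_Q$-orthogonal summands, this splits cleanly as
\[
\min_{\chi\in\Hrel}\|\Psi_0+\chi\|_Q^2=\|P_{\Hmacro}\Psi_0\|_Q^2+\|P_{\Hres}\Psi_0\|_Q^2.
\]

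Next I would identify each of the two summands. For the macroscopic piece, observe that $\sym((K\barx)\otimes e_1)\in\Hmicro$ (take $\chi=0$ in the definition of $\Hmicro$) and therefore
\[
P_{\Hmacro}\big(\sym((K\barx)\otimes e_1)\big)=\mathbf E(K)
\]
by the very definition of $\mathbf E$. Combining with linearity and the definition $K_{\pre}(U)=(\mathbf E^{-1}\circ P_{\Hmacro})\big(\tfrac12\boldsymbol 1_{S_0}U\big)$ gives
\[
P_{\Hmacro}\Psi_0=\mathbf E(K)+\bar r\,\mathbf E(K_{\pre}(U))=\mathbf E\big(K+\bar r K_{\pre}(U)\big),
\]
so that $\|P_{\Hmacro}\Psi_0\|_Q^2=\bar Q\big(K+\bar r K_{\pre}(U)\big)$ by Definition~\ref{D:effective}. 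For the residual piece, again $\sym((K\barx)\otimes e_1)\in\Hmicro\perp\Hres$ kills the macroscopic term, leaving
\[
P_{\Hres}\Psi_0=\bar r\,P_{\Hres}\big(\tfrac12\boldsymbol 1_{S_0}U\big),
\]
and by Definition~\ref{D:effective} this yields $\|P_{\Hres}\Psi_0\|_Q^2=\bar r^2 E_{\res}(U)$. Adding the two contributions gives the claimed right-hand side.

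No step is really hard; the only thing to be careful about is the bookkeeping that $\sym((K\barx)\otimes e_1)$ already belongs to $\Hmicro$ (hence is orthogonal to $\Hres$ and its $\Hmacro$-projection is exactly $\mathbf E(K)$), which is what prevents a cross term between the bending/twisting part $K$ and the residual projection of $\boldsymbol 1_{S_0}U$ from appearing. Everything else is a direct application of the definitions of $\mathbf E$, $K_{\pre}$, $E_{\res}$, together with Pythagoras in $(\mathbb H,(\cdot,\cdot)_Q)$.
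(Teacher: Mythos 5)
Your proof is correct and follows essentially the same route as the paper: both arguments reduce the minimization to a Pythagoras computation in $\mathbb H$ using the orthogonal decomposition $\mathbb H=\Hmacro\oplus\Hrel\oplus\Hres$, then read off the two orthogonal contributions as $\bar Q(K+\bar r K_{\pre}(U))$ and $\bar r^2 E_{\res}(U)$ from Definition~\ref{D:effective}. The only cosmetic difference is the order of steps---you invoke the projection theorem first and then split $\Hrel^\perp=\Hmacro\oplus\Hres$, whereas the paper peels off the $\Hres$-component before minimizing over $\chi\in\Hrel$; these are the same computation.
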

Thanks to the assumptions on $W$ and $S$ we obtain the following properties:
\begin{lemma}
  The maps $\bar Q$ and $E_{\res}$ are quadratic, and $K_{\pre}$ is  linear. Moreover, there exists $C>0$ only depending on $W$ and $S$ such that
  \begin{alignat*}{2}
    \frac{1}{C}|K|^2&\leq \bar Q(K)\leq C|K|^2,\\
    0&\leq E_{\res}(U)\leq C|U|^2,\\
    |K_{\pre}(U)|&\leq C|U|.
  \end{alignat*}
\end{lemma}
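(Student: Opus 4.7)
The plan is to first dispense with the structural claims (quadraticity of $\bar Q$ and $E_{\res}$, linearity of $K_{\pre}$) by unfolding the definitions, and then obtain the quantitative bounds by combining a norm-equivalence on $\mathbb H$ with contractivity of orthogonal projections and the finite-dimensional isomorphism $\mathbf E$.

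Concretely, since $\mathbf E:\R^{3\times 3}_{\Skew}\to\Hmacro$ is linear, $\bar Q(K)=\|\mathbf E(K)\|_Q^2$ is quadratic; since $U\mapsto\tfrac12\boldsymbol{1}_{S_0}U$ is linear and the orthogonal projections $P_{\Hmacro},P_{\Hres}$ together with the linear isomorphism $\mathbf E^{-1}$ are linear, $K_{\pre}$ is linear and $E_{\res}$ is quadratic. The non-negativity $E_{\res}(U)\geq 0$ is automatic from $E_{\res}$ being a squared Hilbert-space norm.

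The key analytic input for the bounds is the two-sided estimate
\begin{equation*}
\tfrac{1}{C}|G|^2\leq Q(\bar x, G)\leq C|G|^2\qquad\text{for all }G\in\R^{3\times 3}_{\sym}\text{ and a.e.\ }\bar x\in S,
\end{equation*}
which follows from (W2), (W3) and the definition \eqref{eq:def:Q} of $Q$, together with the uniform constant in (W2)-(W3). This renders $\|\cdot\|_Q$ equivalent to the standard $L^2$-norm on $\mathbb H=L^2(S;\R^{3\times 3}_{\sym})$. Using this, one computes $\|\tfrac12\boldsymbol{1}_{S_0}U\|_Q^2\leq C|U|^2$ directly. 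Since orthogonal projections are $\|\cdot\|_Q$-contractions, one obtains $E_{\res}(U)=\|P_{\Hres}(\tfrac12\boldsymbol{1}_{S_0}U)\|_Q^2\leq C|U|^2$ and analogously $\|P_{\Hmacro}(\tfrac12\boldsymbol{1}_{S_0}U)\|_Q\leq C|U|$. To upgrade the latter to $|K_{\pre}(U)|\leq C|U|$, I would invoke that $\mathbf E$ is a linear isomorphism between the finite-dimensional space $\R^{3\times 3}_{\Skew}$ and the closed subspace $\Hmacro$, so $\mathbf E^{-1}$ is bounded.

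For $\bar Q$ itself, the upper bound follows from contractivity of $P_{\Hmacro}$ combined with $\|\sym((K\barx)\otimes e_1)\|_Q^2\leq C\fint_S|K\barx|^2\,d\bar x\leq C|K|^2$ (using boundedness of $S$). For the lower bound, observe that by injectivity of $\mathbf E$ the map $K\mapsto\|\mathbf E(K)\|_Q$ is a genuine norm on the finite-dimensional space $\R^{3\times 3}_{\Skew}$, and hence equivalent to $|\cdot|$; this yields $\bar Q(K)\geq\tfrac{1}{C}|K|^2$. The only point that requires some care is ensuring the uniformity in $\bar x$ of the norm-equivalence for $Q(\bar x,\cdot)$; this is standard given (W1)--(W4) but is the one place where quantitative hypotheses on $W$ are genuinely used.
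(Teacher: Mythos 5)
Your proof is correct and takes the natural route; the paper itself omits the proof, citing a similar Lemma in \cite{bartels2022nonlinear}, and the argument there is exactly of the kind you give (norm equivalence on $\mathbb H$, contractivity of the $(\cdot,\cdot)_Q$-orthogonal projections, finite-dimensionality of $\R^{3\times 3}_{\Skew}$ and $\Hmacro$ to bound $\mathbf E^{-1}$ and to upgrade injectivity of $\mathbf E$ to a lower bound). One remark on the point you yourself flag: the two-sided estimate $\tfrac1C|\sym G|^2\le Q(\bar x,G)\le C|G|^2$ uniformly in $\bar x$ is what makes the whole chain quantitative. The lower bound does follow from (W2)--(W3) by Taylor-expanding $W$ at $I$ and using $W(\bar x,F)\ge\tfrac1C\dist^2(F,\SO 3)$. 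The upper bound, however, amounts to $\|\nabla^2W(\cdot,I)\|_{L^\infty(S)}<\infty$, which (W3) alone does not literally give, since it only states pointwise $C^2$-regularity; the paper tacitly assumes this (it is needed already for the scalar product $(\cdot,\cdot)_Q$ to be continuous on $L^2(S;\R^{3\times3}_{\sym})$), and your proof inherits the same tacit assumption, so you are on equal footing with the authors. Apart from that, every step is sound: quadraticity/linearity from composition of linear maps, $E_{\res}\ge0$ as a squared norm, $\|\tfrac12\boldsymbol 1_{S_0}U\|_Q\le C|U|$ and $\|\sym((K\barx)\otimes e_1)\|_Q\le C|K|$ from norm equivalence and boundedness of $S$, and the lower bound $\bar Q(K)\ge\tfrac1C|K|^2$ from the fact that $K\mapsto\|\mathbf E(K)\|_Q$ is a norm on the finite-dimensional space $\R^{3\times3}_{\Skew}$.
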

We omit the proof, since it is similar to \cite[Lemma~2.6]{bartels2022nonlinear}.
\medskip

Next, we introduce a scheme to evaluate these quantities. The scheme invokes $3+5$ correctors that only depend on $Q$, $S$ and $S_0$, and are defined with help of linear elliptic systems on the domain $S$. We start by representing the effective coefficients in coordinates. To that end, we consider  the following orthonormal basis of $\R^{3\times 3}_{\dev}$,
\begin{gather*}
  U_1:=\sqrt{2}\sym(e_3\otimes e_2),\quad U_2:=\sqrt{2}\sym(e_2\otimes e_1),\quad U_3:=\sqrt{2}\sym(e_3\otimes e_1),\\
  U_4:=\sqrt{\frac{2}{3}}(e_1\otimes e_1-\frac12e_2\otimes e_2-\frac12e_3\otimes e_3),\quad U_5:=\frac{1}{\sqrt 2}(e_2\otimes e_2-e_3\otimes e_3),
\end{gather*}
and the following orthonormal basis of $\R^{3\times 3}_{\Skew}$,
\begin{equation*}
  K_1=\frac{1}{\sqrt 2}\big(e_2\otimes e_3-e_3\otimes e_2\big),\quad K_2=\frac{1}{\sqrt 2}\big(e_1\otimes e_2-e_2\otimes e_1\big),\quad K_3=\frac{1}{\sqrt 2}\big(e_1\otimes e_3-e_3\otimes e_1\big).
\end{equation*}

\todoneukamm{Note:
  \protect\begin{equation*}
    \sym(K_1\barx\otimes e_1)=
    \frac{1}{2\sqrt 2}\protect\begin{pmatrix}
      0&x_3&-x_2\\
      x_3&0&0\\
      -x_2&0&0
      \protect\end{pmatrix}=\frac{1}{\sqrt 2}\big(x_3\sym(e_1\otimes e_2)-x_2\sym(e_1\otimes e_3)\big)
    \protect\end{equation*}
  \protect\begin{equation*}
    \sym(K_2\barx\otimes e_1)=
    \frac{x_2}{\sqrt 2}e_1\otimes e_1,\qquad     \sym(K_3\barx\otimes e_1)=
    \frac{x_3}{\sqrt 2}e_1\otimes e_1
    \protect\end{equation*}
}
\begin{lemma}[Coordinatewise representation]\label{L:coeffrep}\mbox{}
  \begin{enumerate}[(a)]
  \item For $i=1,2,3$ consider
    \begin{equation*}
      \Psi_i:=P_{\Hmacro}\Big(\sym\big(K_i\barx\otimes e_1\big)\Big)
    \end{equation*}
    and define $\mathbb Q\in\R^{3\times 3}_{\sym}$ and $\mathbb U\in\R^{3\times 5}$ as
    \begin{eqnarray*}
      \mathbb Q_{ij}&:=&\big(\Psi_i,\Psi_j\big)_{Q},\qquad\text{for }i,j=1,2,3,\\
      \mathbb U_{ij}&:=&\big(\boldsymbol{1}_{S_0}U_j,\Psi_i\big)_{Q}\qquad\text{for }i=1,2,3,\,j=1,\ldots,5.
    \end{eqnarray*}
    Then for all $K\in\R^{3\times 3}_{\Skew}$ and $U\in\R^{3\times 3}_{\dev}$ we have
    \begin{eqnarray}\label{eq:formQ}
      \bar Q(K)&=&k\cdot\mathbb Qk,\qquad k:=(K\cdot K_1,\ldots,K\cdot K_3)^\top,\\
      \label{eq:Kpre}
      K_{\pre}(U)&=&\sum_{i=1}^3(\mathbb Q^{-1}\mathbb Uu)_iK_i,\qquad  u:=\frac12\big(U\cdot U_1,\ldots,U\cdot U_5\big)^\top.
    \end{eqnarray}
  \item For $j=1,\ldots,5$ consider
    \begin{equation*}
      \Phi_j:=\boldsymbol 1_{S_0}U_j-P_{\Hrel}\big(\boldsymbol 1_{S_0}U_j\big)-\sum_{i=1}^3(\mathbb Q^{-1}\mathbb U)_{ij}\Psi_i,
    \end{equation*}
    and define $\mathbb E_{\res}\in\R^{5\times 5}_{\sym}$ as
    \begin{equation*}
      \mathbb E_{\res,ij}:=\big(\Phi_i,\Phi_j\big)_Q,\qquad\text{for }i,j=1,\ldots,5.
    \end{equation*}
    Then for all $U\in\R^{3\times 3}_{\dev}$ we have
    \begin{equation}\label{eq:Eres}
      E_{\res}(U)=u\cdot\mathbb E_{\res}u,,\qquad  u:=\frac12\big(U\cdot U_1,\ldots,U\cdot U_5\big)^\top.
    \end{equation}
  \end{enumerate}
\end{lemma}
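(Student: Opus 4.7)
The plan is to exploit the three bases ($K_1,K_2,K_3$ for $\R^{3\times 3}_{\Skew}$; $U_1,\ldots,U_5$ for $\R^{3\times 3}_{\dev}$; and the induced basis $\Psi_1,\Psi_2,\Psi_3$ of $\Hmacro$) together with the orthogonal decomposition $\mathbb H=\Hmacro\oplus\Hrel\oplus\Hres$ to turn the abstract definitions in Definition~\ref{D:effective} into finite-dimensional matrix computations. Since $K_1,K_2,K_3$ is an orthonormal basis of $\R^{3\times 3}_{\Skew}$, any $K\in\R^{3\times 3}_{\Skew}$ satisfies $K=\sum_i k_i K_i$ with $k_i=K\cdot K_i$; because $\mathbf E$ is linear and $\mathbf E(K_i)=\Psi_i$ by the definition of $\Psi_i$, we get $\mathbf E(K)=\sum_i k_i\Psi_i$. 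Moreover $\Psi_1,\Psi_2,\Psi_3$ is a basis of $\Hmacro$ (since $\mathbf E$ is an isomorphism by the lemma preceding Definition~\ref{D:effective}), so the Gram matrix $\mathbb Q$ is symmetric and positive definite; in particular, $\mathbb Q^{-1}$ exists. Expanding $\|\mathbf E(K)\|_Q^2$ then yields \eqref{eq:formQ} immediately.

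For \eqref{eq:Kpre}, I would write $U=\sum_{l=1}^5(U\cdot U_l)U_l$, so that
\begin{equation*}
  \tfrac12\boldsymbol 1_{S_0}U=\sum_{l=1}^5 u_l\,\boldsymbol 1_{S_0}U_l,\qquad u_l=\tfrac12 U\cdot U_l.
\end{equation*}
By the definition of $K_{\pre}(U)$, the matrix $K_{\pre}(U)=\sum_i k_i K_i$ is characterized by $\mathbf E(K_{\pre}(U))=P_{\Hmacro}\bigl(\tfrac12\boldsymbol 1_{S_0}U\bigr)$, equivalently by
\begin{equation*}
  \bigl(\mathbf E(K_{\pre}(U)),\Psi_j\bigr)_Q=\bigl(\tfrac12\boldsymbol 1_{S_0}U,\Psi_j\bigr)_Q\qquad\text{for }j=1,2,3,
\end{equation*}
since $\Psi_j\in\Hmacro$ so the projection can be dropped on the right. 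The left-hand side equals $\sum_i k_i\mathbb Q_{ji}$ and the right-hand side equals $\sum_l \mathbb U_{jl}u_l=(\mathbb Uu)_j$, giving $\mathbb Qk=\mathbb Uu$, hence $k=\mathbb Q^{-1}\mathbb Uu$, which is \eqref{eq:Kpre}.

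For part (b), the key observation is that the computation in the previous paragraph, applied to each $U_j$ separately, gives $P_{\Hmacro}(\boldsymbol 1_{S_0}U_j)=\sum_i(\mathbb Q^{-1}\mathbb U)_{ij}\Psi_i$. Using the orthogonal decomposition $\mathbb H=\Hmacro\oplus\Hrel\oplus\Hres$, we have $P_{\Hres}=\id-P_{\Hmacro}-P_{\Hrel}$, and therefore
\begin{equation*}
  P_{\Hres}(\boldsymbol 1_{S_0}U_j)=\boldsymbol 1_{S_0}U_j-P_{\Hrel}(\boldsymbol 1_{S_0}U_j)-\sum_{i=1}^3(\mathbb Q^{-1}\mathbb U)_{ij}\Psi_i=\Phi_j.
\end{equation*}
By linearity, $P_{\Hres}\bigl(\tfrac12\boldsymbol 1_{S_0}U\bigr)=\sum_{l=1}^5 u_l\Phi_l$, and expanding the squared $Q$-norm yields \eqref{eq:Eres}.

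No step presents a genuine obstacle; the whole argument is linear algebra in $\mathbb H$, and the only substantive input is the isomorphism property of $\mathbf E$ (ensuring invertibility of $\mathbb Q$) together with the orthogonal decomposition of $\mathbb H$. The mildly delicate bookkeeping is the placement of the factor $\tfrac12$: it lives in the coefficients $u_l$ and is absent from the matrix $\mathbb U$, which in turn is what makes the definition of $\Phi_j$ use $\boldsymbol 1_{S_0}U_j$ (not $\tfrac12\boldsymbol 1_{S_0}U_j$) while the final formula for $E_{\res}(U)$ still recovers the correct squared norm.
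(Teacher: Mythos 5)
Your argument is correct and is essentially the same as the paper's: both expand everything in the bases $\{K_i\}$, $\{U_j\}$, $\{\Psi_i\}$, characterize the $\Hmacro$-projection by testing against $\Psi_j$ (which yields the $3\times 3$ Gram system $\mathbb Qk=\mathbb Uu$), and for part (b) use the orthogonal decomposition $\mathbb H=\Hres\oplus\Hmacro\oplus\Hrel$ to identify $\Phi_j=P_{\Hres}(\boldsymbol 1_{S_0}U_j)$. The only cosmetic difference is that the paper first records the identity $P_{\Hmacro}(\boldsymbol 1_{S_0}U_j)=\sum_i(\mathbb Q^{-1}\mathbb U)_{ij}\Psi_i$ and then applies $\mathbf E^{-1}$ and linearity, whereas you handle a general $U$ in one pass and recover this identity afterward for part (b); both are the same computation.
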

The orthogonal projections onto $\Hmacro$ and $\Hrel$ appearing in the definition of $\Psi_i$ and $\Phi_j$ lead to corrector problems that take the form of quadratic minimization problems whose solutions are characterized by linear elliptic systems:
\begin{lemma}[Corrector equations]\label{L:correctors1}
  For $i=1,2,3$ and $j=1,\ldots,5$, let $(a_{i},\varphi_i)$ and $(a_{U_j},\varphi_{U_j})$ denote the unique minimizer in $\R\times H^1_{\rm av}(S;\R^3)$ of the functional
  \begin{align}\label{eq:corr1}
    &(a,\varphi)\mapsto \fint_SQ\Big(\bar x,F+(ae_1,\bar\nabla\varphi)\Big)\,d\bar x,
  \end{align}
  with $F=\sym(K_i\barx\otimes e_1)$ and $F=\boldsymbol{1}_{S_0}U_j$, respectively.
  Then
  \begin{eqnarray*}
    \Psi_i&=&\sym\Big(K_i\barx\otimes e_1+(a_ie_1,\bar\nabla\varphi_i)\Big),\\
    \Phi_j&=&\boldsymbol 1_{S_0}U_j+\sym\big(a_{U_j},\bar\nabla\varphi_{U_j}\big)-\sum_{i=1}^3\big(\mathbb Q^{-1}\mathbb U\big)_{ij}\Psi_i.
  \end{eqnarray*}
\end{lemma}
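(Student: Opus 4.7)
The plan is to interpret both formulas as explicit representations of orthogonal projections onto $\Hrel$ via the isomorphism of Lemma~\ref{L:Hrelrep}. The starting observation is that for $F\in\mathbb H$, the orthogonal projection $P_{\Hrel}F$ is characterized variationally by
\begin{equation*}
  P_{\Hrel}F=\mathrm{argmin}_{\chi\in\Hrel}\|F-\chi\|_Q^2,
\end{equation*}
and by Lemma~\ref{L:Hrelrep} every $\chi\in\Hrel$ has a unique representation $\chi=\sym(ae_1,\bar\nabla\varphi)$ with $(a,\varphi)\in\R\times H^1_{\rm av}(S;\R^3)$. After the sign change $(a,\varphi)\rightsquigarrow (-a,-\varphi)$, minimizing $\|F-\chi\|_Q^2$ turns into minimizing the functional \eqref{eq:corr1}. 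Existence and uniqueness of the minimizer follows from the direct method combined with the Korn-type coercivity built into the norm on $H^1_{\rm av}(S;\R^3)$ from Lemma~\ref{L:Hrelrep} and the positive definiteness of $Q$ on $\R^{3\times 3}_{\sym}$ (ensured by (W3)). From this identification we obtain $P_{\Hrel}F=-\sym(a_{*}e_1,\bar\nabla\varphi_{*})$, where $(a_{*},\varphi_{*})$ is the minimizer of \eqref{eq:corr1}.

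For part (a) I first note that $F_i:=\sym(K_i\barx\otimes e_1)$ lies in $\Hmicro$ (take $K=K_i$ and $\chi=0$ in the definition of $\Hmicro$). Since $\Hmicro=\Hmacro\oplus\Hrel$ $Q$-orthogonally, the projection $\Psi_i=P_{\Hmacro}F_i$ equals $F_i-P_{\Hrel}F_i$. Applying the characterization above with $F=F_i$ gives
\begin{equation*}
  \Psi_i=\sym(K_i\barx\otimes e_1)+\sym(a_ie_1,\bar\nabla\varphi_i)=\sym\!\Big(K_i\barx\otimes e_1+(a_ie_1,\bar\nabla\varphi_i)\Big),
\end{equation*}
which is exactly the claimed formula.

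For part (b) I apply the same principle with $F=\boldsymbol 1_{S_0}U_j$, yielding
\begin{equation*}
  \boldsymbol 1_{S_0}U_j-P_{\Hrel}(\boldsymbol 1_{S_0}U_j)=\boldsymbol 1_{S_0}U_j+\sym(a_{U_j}e_1,\bar\nabla\varphi_{U_j}).
\end{equation*}
Substituting this into the definition of $\Phi_j$ from Lemma~\ref{L:coeffrep}(b) gives the stated identity directly.

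I do not anticipate a genuine obstacle: the content is essentially a variational reformulation of the $Q$-orthogonal projection onto $\Hrel$, plus the elementary fact that $F_i\in\Hmicro$ so that $P_{\Hmacro}F_i=F_i-P_{\Hrel}F_i$. The only point that requires a little care is the sign bookkeeping between the variational form \eqref{eq:corr1} (with a $+$ sign inside $Q$) and the minimization of $\|F-\chi\|_Q^2$ (with a $-$ sign), which is resolved by the above substitution; and the verification that the minimizer of \eqref{eq:corr1} is unique, which follows from Lemma~\ref{L:Hrelrep} together with the strict convexity of $Q$ on $\R^{3\times 3}_{\sym}$.
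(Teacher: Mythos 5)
Your proof is correct and takes essentially the same route as the paper: identify the $Q$-orthogonal projection onto $\Hrel$ with the minimizer of the quadratic functional \eqref{eq:corr1} via the parametrization in Lemma~\ref{L:Hrelrep}, and then use $F_i\in\Hmicro$ together with the orthogonal splitting $\Hmicro=\Hmacro\oplus\Hrel$ to obtain $\Psi_i=F_i-P_{\Hrel}F_i$, while the formula for $\Phi_j$ follows directly by substituting $-P_{\Hrel}(\boldsymbol 1_{S_0}U_j)=\sym(a_{U_j}e_1,\bar\nabla\varphi_{U_j})$ into its definition.
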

%
%
\subsection{The special case of an isotropic material with circular cross-section}
We consider the special case of a homogeneous, isotropic material, i.e,
\begin{equation}\label{eq:iso}
  Q(G)=\frac{\lambda}{2}(\trace G)^2+\mu|\sym G|^2.
\end{equation}
In that case the formulas for  $\bar Q$ and $K_{\pre}$ become more explicit. They further simplify if we consider bi-rods with 
a circular cross-section:
%

\begin{lemma}[The isotropic case and the case with a circular cross-section]\label{L:isotropic}
  Let $\alpha_S\in H^1(S)$ denote the unique minimizer to
  \begin{equation}\label{eq:alpha}
    \fint_S\Big|
    \begin{pmatrix}
      \partial_2\alpha_S\\\partial_3\alpha_S
    \end{pmatrix}
    +\frac{1}{\sqrt 2}
    \begin{pmatrix}
      x_3\\-x_2
    \end{pmatrix}\Big|^2\,d\bar x\qquad\text{subject to }\int_S\alpha_S\,d\bar x=\int_S\partial_2\alpha_S\,d\bar x=\int_S\partial_3\alpha_S\,d\bar x=0,
  \end{equation}
  and set
  \begin{equation*}
    c_S:=      \fint_S\Big|
    \begin{pmatrix}
      \partial_2\alpha_S\\\partial_3\alpha_S
    \end{pmatrix}
    +\frac{1}{\sqrt 2}
    \begin{pmatrix}
      x_3\\-x_2
    \end{pmatrix}\Big|^2\,d\bar x.
  \end{equation*}
  Assume \eqref{eq:iso}. Then:
  \begin{enumerate}[(a)]
  \item We have $\mathbb Q=\operatorname{diag}(q_1,q_2,q_3)$ where
    \begin{align*}
      &q_1:=\frac{\mu}{2} c_S,\qquad q_2:=\frac{\mu(3\lambda+2\mu)}{\lambda+\mu}\fint_S\frac{x_2^2}{4}\,d\bar x,\qquad q_3:=\frac{\mu(3\lambda+2\mu)}{\lambda+\mu}\fint_S\frac{x_3^2}{4}\,d\bar x,
    \end{align*}
    and
    \begin{equation*}
      \begin{aligned}
        K_{\pre}(U)=\,&\,
        \Big(u_2\big(\tfrac{\sqrt 2}{|S|c_S}\int_{S_0}\partial_2\alpha_S{+}\tfrac{x_3}{\sqrt 2}\,d\bar x\big)+u_3
        \big(\tfrac{\sqrt 2}{|S|c_S}\int_{S_0}\partial_3\alpha_S{-}\tfrac{x_2}{\sqrt 2}\,d\bar x\big)\Big)K_1\\
        \,&\,+u_4\Big(\frac{2}{\sqrt 3}\frac{\int_{S_0}x_2\,d\bar x}{\int_Sx_2^2\,d\bar x}\Big)K_2
        +u_4\Big(\frac{2}{\sqrt 3}\frac{\int_{S_0}x_3\,d\bar x}{\int_Sx_3^2\,d\bar x}\Big)K_3,
      \end{aligned}
    \end{equation*}
    where $u_1,\ldots,u_5$ are defined as in \eqref{eq:Kpre}.
  \item In the case of a circular cross-section, $S=B(0;\pi^{-\frac12})$,  we have $\alpha_S=0$, $c_S=\frac{1}{4\pi}$, and
    \begin{equation*}
      q_1=\frac{1}{8\pi}\mu,\qquad q_2=q_3=\frac{1}{16\pi}\frac{\mu(3\lambda+2\mu)}{\lambda+\mu}.
    \end{equation*}
    If in addition $S_0=S\cap\{x_3\geq 0\}$, then
    \begin{equation*}
      K_{\pre}(U)=\,\,
      \frac{8 u_2}{3\sqrt\pi}K_1+\frac{16 u_4}{3\sqrt{3\pi}}K_3,
    \end{equation*}
    where $u_1,\ldots,u_5$ are defined as in \eqref{eq:Kpre}.
  \end{enumerate}
\end{lemma}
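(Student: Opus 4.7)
The plan is to combine the coordinatewise representation of Lemma~\ref{L:coeffrep} with the corrector characterization of Lemma~\ref{L:correctors1}, and to exploit the fact that in the isotropic case the corrector problem \eqref{eq:corr1} decouples along the basis $K_1,K_2,K_3$ of $\R^{3\times 3}_{\Skew}$ into one torsion and two pure-bending problems, each admitting an explicit minimizer.

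For the torsion corrector associated with $K_1$, an elementary calculation shows that $\sym(K_1\barx\otimes e_1)$ is trace-free and supported only on the $(1,2),(1,3)$ off-diagonal entries. The ansatz $a_1=0$, $\varphi_1=(\alpha_S,0,0)$ keeps the full strain in the same block and trace-free, so the $\lambda$-part of $Q$ is inactive and the $\mu$-part collapses precisely to the functional of \eqref{eq:alpha}; hence $\alpha_S$ is optimal, $\Psi_1$ can be read off, and $q_1=\frac{\mu}{2}c_S$ follows by substitution. Global optimality of the ansatz is seen by noting that the remaining freedom in $\varphi_2,\varphi_3,a$ contributes only to the diagonal and to the $(2,3)$ entry, each of which is then set to zero. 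For $K_2$ (and analogously $K_3$) the test field is the uniaxial strain $\tfrac{x_2}{\sqrt 2}e_1\otimes e_1$; the minimizer realizes the classical Poisson contraction, e.g.\ $\varphi_2\propto x_2^2-x_3^2$ and $\varphi_3\propto x_2x_3$, for which the relaxed energy density equals $\tfrac{E}{2}(\tfrac{x_2}{\sqrt 2})^2$ with Young's modulus $E=\mu(3\lambda+2\mu)/(\lambda+\mu)$; integration over $S$ gives the claimed $q_2$. The off-diagonal entries of $\mathbb Q$ vanish by \eqref{ass:S}: the nonzero components of $\Psi_1$ and of $\Psi_2,\Psi_3$ are disjoint, and $(\Psi_2,\Psi_3)$ couples only through $\int_S x_2x_3\,d\bar x=0$.

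Next, the entries $\mathbb U_{ij}=(\boldsymbol 1_{S_0}U_j,\Psi_i)_Q$ are computed from the same isotropic bilinear form. Since each $U_j$ is trace-free, the $\lambda$-term drops, and matching the nonzero components of $\Psi_i$ with those of $U_j$ shows that the only surviving entries are $\mathbb U_{1,2},\mathbb U_{1,3}$ (from the $(1,2),(1,3)$ components of $\Psi_1$) and $\mathbb U_{2,4},\mathbb U_{3,4}$ (from the $(1,1)$ components of $\Psi_2,\Psi_3$); the remaining pairings are excluded either by disjoint support of the matrix entries or by the parity conditions in \eqref{ass:S}. Substitution into \eqref{eq:Kpre}, keeping careful track of the $\sqrt 2$ factors coming from the chosen orthonormal bases $\{U_j\}$ and $\{K_i\}$, yields the displayed formula of part~(a).

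For part~(b), I first check $\alpha_S\equiv 0$ when $S=B(0;\pi^{-1/2})$: the Euler--Lagrange equation of \eqref{eq:alpha} is $\Delta\alpha_S=0$ with Neumann datum $-\tfrac{1}{\sqrt 2}(x_3,-x_2)\cdot\nu$, which vanishes on the circle because $\nu\parallel(x_2,x_3)$; the mean-zero constraint then forces $\alpha_S=0$. The values $c_S=\tfrac{1}{4\pi}$ and $\fint_S\tfrac{x_2^2}{4}\,d\bar x=\tfrac{1}{16\pi}$ follow by radial symmetry, and for $S_0=S\cap\{x_3\ge 0\}$ a one-line polar computation gives $\int_{S_0}x_3\,d\bar x=\tfrac{2}{3\pi^{3/2}}$, while $\int_{S_0}x_2\,d\bar x=0$ by the $x_2\mapsto -x_2$ symmetry of $S_0$. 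Substituting into part~(a) produces the stated coefficients. The principal obstacle is not conceptual but bookkeeping, namely tracking matrix components and propagating the various $\sqrt 2$ normalizations introduced by the chosen orthonormal bases.
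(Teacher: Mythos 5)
Your argument follows the same route as the paper: explicit computation of the correctors $\Psi_1,\Psi_2,\Psi_3$ via Lemmas~\ref{L:coeffrep}--\ref{L:correctors1}, identification of the torsion problem \eqref{eq:alpha} (with $\varphi_1=(\alpha_S,0,0)$) and of the Poisson-contraction correctors for $K_2,K_3$, diagonality of $\mathbb Q$ from disjoint matrix supports and \eqref{ass:S}, and then evaluation of $\mathbb U$ and $\mathbb Q^{-1}\mathbb U$; for part~(b) the tangentiality of $(x_3,-x_2)^\top$ on the circle gives $\alpha_S=0$, and the remaining quantities are elementary integrals. One small inaccuracy worth flagging: in part~(a), for $\mathbb U_{ij}$ the integrals are over $S_0$, which is arbitrary, so the symmetry conditions \eqref{ass:S} cannot be invoked; the vanishing of $\mathbb U_{2,5}$ and $\mathbb U_{3,5}$ is instead a pointwise algebraic cancellation (the $(2,2)$ and $(3,3)$ entries of $\Psi_2,\Psi_3$ are equal, so they annihilate $U_5=\tfrac1{\sqrt2}(e_2\otimes e_2-e_3\otimes e_3)$). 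This does not affect the list of surviving entries or the final formulas.
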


\section{Simulation and model exploration}\label{S:num}
For our numerical experiments, we use a discrete gradient flow approach based on the work in \cite{BarRei20} in order to numerically approximate critical points of the energy functional $\mathcal E$.
For convenience we use the notation $y', y'',\cdots$ to denote derivatives with regard to~$x_1$. Furthermore, in this section we use $h$ to denote the discretization scale (and not the thickness of the three-dimensional domain as in the previous section).

We first bring the energy functional $\mathcal E$ into a form that is similar to the one considered in \cite{BarRei20}. Note that for a framed curve $(y,R)$ the columns of the rotational frame take the form
\begin{equation*}
  R=\big(y',b,y'\wedge b),\qquad\text{where }b:=Re_2.
\end{equation*}
We may introduce two bending components and a twist rate of the curve via
\begin{equation*}
  \kappa_b:=y''\cdot b,\qquad \kappa_d:=y''\cdot (y'\wedge b)\qquad\text{and}\qquad\beta:=b'\cdot (y'\wedge b),
\end{equation*}
and deduce that
\begin{equation*}
  K=R^\top\partial_1R=
  \begin{pmatrix}
    0&-\kappa_b&-\kappa_d\\
    \kappa_b&0&\beta\\
    \kappa_d&-\beta&0
  \end{pmatrix}=\sqrt{2}\big(\beta K_1+\kappa_b K_2+\kappa_d K_3\big),
\end{equation*}
where $K_1,K_2,K_3$ denote the orthonormal basis of $\R^{3\times 3}_{\Skew}$ introduced above.

Motivated by this we introduce the functional
\begin{align*}
  \bar{\mathcal E}(y,b,\hat n)= \ &\frac{1}{2}\int_{\omega} \bar{Q}(K + \bar{r}K_{\operatorname{pre}}(U(\hat{n}))) + \bar{r}^2 E_{\operatorname{res}}(U(\hat{n})) \,dx_1\\
                                  &+\frac{1}{2}\frankconstant{}^2 \int_{\omega}|(R\hat n)'|^2\,dx_1,
\end{align*}
where
\begin{equation}\label{eq:relation}
  U(\hat{n}) := \frac{1}{3} \id - \hat{n} \otimes \hat{n},\qquad \hat{n} := R^\top n,\qquad R := (y',b,y'\wedge b),
\end{equation}
and note that we have $\mathcal E(y,R,n)=2\bar{\mathcal E}(y,b,\hat n)$ provided $y,R,b,n$ and $\hat n$ are related by \eqref{eq:relation}.

We note that with help of $\hat n$ (which is just the LCE-director $n$ expressed in local coordinates), the terms $ K_{\operatorname{pre}}(U(\hat{n})) $ and $ E_{\operatorname{res}}(U(\hat{n})) $ become independent of $y$ and $R$---a property that will simplify the form of the gradient of $\bar{\mathcal E}$.

In the isotropic case, which we shall consider from now on, the expression further simplifies by appealing to Lemma~\ref{L:isotropic}, and we obtain
\begin{align*}
  \bar{\mathcal E}(y,b,\hat n)= \ &\frac{1}{2}\int_{\omega} \bar{q}_1|\beta-\bar r k_1(U(\hat{n}))|^2 + \bar{q}_2|\kappa_b-\bar r k_2(U(\hat{n}))|^2 \\
                                  &\ \ +\bar{q}_3|\kappa_d-\bar r k_3(U(\hat{n}))|^2 + \bar{r}^2 E_{\operatorname{res}}(U(\hat{n})) \,dx_1\\
                                  &+\frac{1}{2}\frankconstant{}^2 \int_{\omega}|(R\hat n)'|^2\,dx_1,
\end{align*}
where for $ i = 1,2,3 $, the linear maps $ k_i : \R^{3\times 3}_{\operatorname{dev}} \rightarrow \R $ are given by $ k_i(U) = \frac{1}{\sqrt{2}}K_{\operatorname{pre}}(U) \cdot K_i $.
By appealing to binomial formulas and the relations $ |y''|^2 = |\kappa_b|^2 + |\kappa_d|^2 $ and $ |b'|^2 = |\kappa_b|^2 + |\beta|^2 $, we eventually get
\begin{equation}\label{eq:num:barE}
  \begin{aligned}
    \bar{\mathcal E}(y,b,\hat n)=\ & \frac12\int_{\omega}\bar{q}_3 |y''|^2+\bar{q}_1 |b'|^2 + \frankconstant{}^2 |(R\hat n)'|^2\,dx_1\\
    &+G(y,b) + \bar{\mathcal E}_{\operatorname{res}}(\hat{n}) + N(y,b,\hat{n})
  \end{aligned}
\end{equation}
with the functionals
\begin{align*}
  G(y,b) = \ & \frac12 (\bar{q}_2-\bar{q}_1-\bar{q}_3)\int_{\omega} |y''\cdot b|^2\,dx_1,\\
  \bar{\mathcal E}_{\operatorname{res}}(\hat{n}) = \ & \frac{1}{2}\bar{r}^2 \int_{\omega} E_{\operatorname{res}}(U(\hat{n})) \,dx_1\,\,\,\,\,\, \textrm{and}\\
  N(y,b,\hat{n}) = \ & \int_\omega \frac{1}{2} \bar{r}^2\sum_{i=1}^{3} \bar{q}_i|k_i(U(\hat{n}))|^2 - \bar{r} \bar{q}_1 (b'\cdot (y' \wedge b)) k_1(U(\hat{n}))\\
             &\ \ \ \ - \bar{r} \bar{q}_2 (y''\cdot b) k_2(U(\hat{n})) - \bar{r} \bar{q}_3 (y''\cdot (y' \wedge b)) k_3(U(\hat{n}))\,dx_1.
\end{align*}
The structure of the energy functional \eqref{eq:num:barE} is similar to the bending-twisting energy that was used in \cite{BarRei20} with the difference that we now have additional terms which depend on the LCE-director $ \hat{n} $.


\subsection{Numerical minimization by a discrete gradient flow}
Next, we introduce a suitable discretisation. For the approximation of the deformation $ y $, our approach uses piecewise cubic, $ C^1 $-conforming elements, whereas the frame director $ b $ and the LCE-director $ \hat{n} $ are approximated via piecewise linear, continuous elements. 
More specifically, following \cite{bartels2020finite} we consider a partitioning of $\bar\omega = [0,L]$ defined by sets of nodes $\mathcal N_h$ and elements $\mathcal T_h$, and denote by
\begin{align*}
  \mathcal S^{1,0}(\mathcal T_h)=&\{b_h\in C^0(\bar\omega)\}\,:\,b_h\vert_T\in P_1(T)\text{ for all }T\in\mathcal T_h\},\\
  \mathcal S^{3,1}(\mathcal T_h)=&\{y_h\in C^1(\bar\omega)\}\,:\,y_h\vert_T\in P_3(T)\text{ for all }T\in\mathcal T_h\},
\end{align*}
the associated spaces of piecewise linear and continuous (resp.\ piecewise cubic and $C^1$-conforming) finite elements. Moreover, we introduce the discrete space
\begin{align*}
  V^h_{\operatorname{LCE}} = \mathcal{S}^{3,1}(\mathcal{T}_h)^3 \times \mathcal{S}^{1,0}(\mathcal{T}_h)^3 \times \mathcal{S}^{1,0}(\mathcal{T}_h)^3.
\end{align*}
On this vector space we define a discrete energy functional $ \bar{\mathcal E}_{h,\eps} $, which contains the same terms as $ \bar{\mathcal E} $---in some cases with appropriate quadrature---as well as a penalty term to approximately incorporate the contraint $ y' \cdot b = 0 $: For $ (y_h,b_h,\hat{n}_h) \in V^h_{\operatorname{LCE}} $ let $ R_h = (y_h',b_h,y_h'\wedge b_h) $ and define
\begin{equation}\label{eq:num:barEh}
  \begin{aligned}
    \bar{\mathcal E}_{h,\eps}(y_h,b_h,\hat n_h)=\ & \frac12\int_{\omega}\bar{q}_3 |y_h''|^2+\bar{q}_1 |b_h'|^2 + \frankconstant{}^2 |(R_h\hat n_h)'|^2\,dx_1\\
    & + P_{h,\eps}(y_h,b_h)+G_h(y_h,b_h) + \bar{\mathcal E}_{\operatorname{res},h}(\hat{n}_h) + N_h(y_h,b_h,\hat{n}_h),
  \end{aligned}
\end{equation}
where the aforementioned penalty term is defined as
\begin{align*}
  P_{h,\eps}(y_h,b_h) = \frac{1}{2\eps}\int_{\omega} \mathcal{I}_h^{1,0} [(y_h'\cdot b_h)^2]\, dx_1.
\end{align*}
Above, $ \mathcal{I}_h^{1,0} $ denotes the nodal interpolation operator associated with $\mathcal{S}^{1,0}(\mathcal{T}_h)$ and $\eps>0$ a parameter to adjust the penalization. The functionals $ G_h $, $ \bar{\mathcal E}_{\operatorname{res},h} $ and $ N_h $ are discrete versions of $ G $,  $ \bar{\mathcal E}_{\operatorname{res}} $ and $ N $ respectively, that contain nodal interpolation operators on the related discrete spaces to simplify the computation of the integrals.
\todoneukamm{Definitionen angeben!}

The energy \eqref{eq:num:barEh} is minimized in the discretized admissible set
\begin{align*}
  \mathcal A_h:=\{(y_h,b_h,\hat{n}_h) \in V^h_{\operatorname{LCE}}\,:\,L_{\operatorname{BC}}(y_h,b_h,\hat{n}_h) = \ell_{\operatorname{BC}} ,\ \ \ \\
  |y_h'(z)| = |b_h(z)| = |\hat{n}(z)| = 1 \text{ f.a. } z \in \mathcal N_h\},
\end{align*}
where $ \mathcal{N}_h $ denotes the set of vertices related to $ \mathcal T_h $.
The expression $ L_{\operatorname{BC}}(y_h,b_h,\hat{n}_h) = \ell_{\operatorname{BC}} $ implies that $ (y_h,b_h,\hat{n}_h) $ fulfil the boundary conditions specified by $ \ell_{\operatorname{BC}} $. Different conditions such as fixed, clamped, free and periodic are possible. The boundary conditions for the individual variables are denoted by $ L_{\operatorname{BC},y} $, $ L_{\operatorname{BC},b} $ and $ L_{\operatorname{BC},\hat{n}} $.
    %


We employ a discrete gradient flow scheme to approximate minimizers of the discretized energy and incorporate linearized versions of the unit-length and boundary conditions by restricting each step of the iteration to a corresponding tangent space of the admissible set. For $ (y_h,b_h,\hat{n}_h) \in \mathcal A_h $, these tangent spaces are given by
\begin{align*}
  \mathcal F_{h,y}(y_h) &= \{\delta y_h \in \mathcal{S}^{3,1}(\mathcal{T}_h)^3 \,:\, L_{\operatorname{BC},y}(\delta y_h) = 0,\ y_h'(z) \cdot \delta y_h'(z) = 0 \text{ f.a. } z \in \mathcal N_h \},\\
  \mathcal F_{h,b}(b_h) &= \{\delta b_h \in \mathcal{S}^{1,0}(\mathcal{T}_h)^3 \,:\, L_{\operatorname{BC},b}(\delta b_h) = 0,\ b_h(z) \cdot \delta b_h(z) = 0 \text{ f.a. } z \in \mathcal N_h \}
\end{align*}
as well as
\begin{align*}
  \ \mathcal F_{h,\hat{n}}(\hat{n}_h) = \{\delta \hat{n}_h \in \mathcal{S}^{1,0}(\mathcal{T}_h)^3 \,:\,  L_{\operatorname{BC},\hat{n}}(\delta \hat{n}_h) = 0,\ \hat{n}_h(z) \cdot \delta \hat{n}_h(z) = 0 \text{ f.a. } z \in \mathcal N_h \}.
\end{align*}
Note that the functions in these tangent spaces are required to satisfy homogeneous versions of the given boundary conditions.

The variations of the energy with respect to the different variables are approximated semi-implicitly, where the convex quadratic terms are mostly handled implicitly while we rely on an explicit treatment of the nonlinear and non-convex parts. We let $ (\cdot,\cdot)_Y $, $ (\cdot,\cdot)_X  $ and $ (\cdot,\cdot)_Z  $ denote bilinear forms on the spaces $ \mathcal{S}^{3,1}(\mathcal{T}_h)^3 $, $ \mathcal{S}^{1,0}(\mathcal{T}_h)^3 $ and  $\mathcal{S}^{1,0}(\mathcal{T}_h)^3 $ respectively and use the backwards difference quotient $ d_t $.

\begin{algorithm}
  Set initial values $ (y_h^0,b_h^0,\hat{n}_h^0) \in	\mathcal A_h $, a timestep size $ \tau $, a stopping criterion $ \eps_{\operatorname{stop}} $ and initialize $ k = 1 $.
  \begin{enumerate}[(1)]
  \item Compute $ y_h^k = y_h^{k-1} + \tau d_ty_h^k $ with $ d_ty_h^k \in \mathcal F_{h,y}(y_h^{k-1})$ such that 
    \begin{align*}
      \left(d_ty_h^k,\delta y_h\right)_Y +\ & \bar{q}_3\left([y_h^k] '', \delta y_h''\right)_{L^2(\omega)} + \frac{\partial P_{h,\eps}(y_h^k,b_h^{k-1})}{\partial y_h^k}[\delta y_h]\\
      =\ &- \frankconstant{}^2 \left((\frac{\partial R_h^{k-1}}{\partial y_h^{k-1}}[\delta y_h])\hat{n}_h^{k-1})' , (R_h^{k-1} \hat{n}_h^{k-1})' \right)_{L^2(\omega)}\\
                                            & - \frac{\partial G_h(y_h^{k-1},b_h^{k-1})}{\partial y_h^{k-1}}[\delta y_h]  - \frac{\partial N_h(y_h^{k-1},b_h^{k-1},\hat{n}_h^{k-1})}{\partial y_h^{k-1}}[\delta y_h]
    \end{align*}
    for all $ \delta y_h \in \mathcal F_{h,y}(y_h^{k-1}) $.
  \item Compute $ b_h^k = b_h^{k-1} + \tau d_tb_h^k $ with $ d_tb_h^k \in \mathcal F_{h,b}(b_h^{k-1})$ such that 
    \begin{align*}
      \left(d_tb_h^k,\delta b_h\right)_X +\ & \bar{q}_1\left([b_h^k] ', \delta b_h'\right)_{L^2(\omega)} + \frac{\partial P_{h,\eps}(y_h^k,b_h^k)}{\partial b_h^k}[\delta b_h]\\
      =\ &- \frankconstant{}^2 \left((\frac{\partial R_h^{k-1}}{\partial b_h^{k-1}}[\delta b_h])\hat{n}_h^{k-1})' , (R_h^{k-1} \hat{n}_h^{k-1})' \right)_{L^2(\omega)}\\
                                            & - \frac{\partial G_h(y_h^{k},b_h^{k-1})}{\partial b_h^{k-1}}[\delta b_h]  - \frac{\partial N_h(y_h^{k},b_h^{k-1},\hat{n}_h^{k-1})}{\partial b_h^{k-1}}[\delta b_h]
    \end{align*}
    for all $ \delta b_h \in \mathcal F_{h,b}(b_h^{k-1}) $.
  \item Compute $ \hat{n}_h^k = \hat{n}_h^{k-1} + \tau d_t\hat{n}_h^k $ with $ d_t\hat{n}_h^k \in \mathcal F_{h,\hat{n}}(\hat{n}_h^{k-1})$ such that 
    \begin{align*}
      \left(d_t\hat{n}_h^k,\delta \hat{n}_h\right)_Z +\ & \frankconstant{}^2\left((R_h^k\hat{n}_h^k)', (R_h^k\delta \hat{n}_h)'\right)_{L^2(\omega)}\\
      =\ & - \frac{\partial \bar{\mathcal E}_{\operatorname{res},h}(\hat{n}_h^{k-1})}{\partial \hat{n}_h^{k-1}}[\delta \hat{n}_h] - \frac{\partial N_h(y_h^{k},b_h^{k},\hat{n}_h^{k-1})}{\partial \hat{n}_h^{k-1}}[\delta \hat{n}_h]
    \end{align*}
    for all $ \delta \hat{n}_h \in \mathcal F_{h,\hat{n}}(\hat{n}_h^{k-1}) $.
  \item Stop the iteration if $ \|d_ty_h^k\|_Y + \|d_tb_h^k\|_X + \|d_t\hat{n}_h^k\|_Z \leq \eps_{\operatorname{stop}} $. Otherwise set $ k \mapsto k+1 $ and continue with (1). 
  \end{enumerate}
\end{algorithm}

\subsection{Numerical experiments}
The experiments we present serve the purpose of investigating properties of the LCE-model and the proposed algorithm. Stability and convergence can be investigated following the scheme presented in \cite{BarRei20}, where these results are available. Additionally, our experiments indicate stability, at least for the parameters specified below.

We simulate an elastic rod made of a nearly incompressible isotropic material by using the Lam\'e parameters $ \lambda = 1000 $ and $ \mu = 1 $ and assume it to have a circular cross-section where the LCE-material fills one semi-circle, i.e.
\begin{align*}
  S = B(0;\pi^{-\frac{1}{2}}),\qquad S_0 = S \cap \{\bar{x} = (x_2,x_3) : x_3 > 0\}.
\end{align*}
With these specifications and Lemma \ref{L:isotropic}, we are able to infer the representations of $ \bar{Q} $ and $ K_{\operatorname{pre}} $. For $ E_{\operatorname{res}} $, the Lemmas \ref{L:coeffrep} and \ref{L:correctors1} imply that we need to solve several quadratic minimization problems to assemble the matrix $ \mathbb{E}_{\operatorname{res}} $. The corresponding linear elliptic systems are approximately solved using a standard finite-element-method and lead to
\begin{align*}
  \mathbb{E}_{\operatorname{res}} = 10^{-2}
  \begin{pmatrix}
    0.95 & 0 & 0 & 0 & 0\\
    0 & 10.77 & -0.01 & 0 & 0\\
    0 & -0.01 & 0.18 & 0 & 0\\
    0 & 0 & 0 & 34.94 & 0\\
    0 & 0 & 0 & 0 & 4.9
  \end{pmatrix}.
\end{align*}
This matrix characterizes $ E_{\operatorname{res}} $ with regard to the basis $ \{U_1,\dots,U_5\} $ of $ \mathbb{R}^{3\times 3}_{\operatorname{dev}} $ which is used in Lemma \ref{L:coeffrep}.

Additionally, we choose the spacial step size $ h = 1/200 $ and the constant timestep size $ \tau = h/2 = 1/400 $ as well as the model parameter $ \eps = 1 / 200 $. For $ y_h, \bar{y}_h \in \mathcal{S}^{3,1}(\mathcal{T}_h)^3 $ and $ b_h, \bar{b}_h \in \mathcal{S}^{1,0}(\mathcal{T}_h)^3 $, the bilinear forms we use are given by
\begin{align*}
  (y_h , \bar{y}_h)_Y &= (y_h , \bar{y}_h)_{L^2(\omega)} + h(y_h'' , \bar{y}_h'')_{L^2(\omega)},\\
  (b_h , \bar{b}_h)_X = (b_h , \bar{b}_h)_Z &= (b_h , \bar{b}_h)_{L^2(\omega)} + h(b_h' , \bar{b}_h')_{L^2(\omega)}.
\end{align*}
We next specify boundary conditions and external forces which we use in our experiments.

    %
\example[Bending via magnetic field]\label{ex_magn_field}
Our first experiment focuses on a straight line from the clamped end $ (0,0,0) $ to the free end $ (2,0,0) $. In the beginning $ y' = (1,0,0) $, $ b = (0,1,0) $, $ d = (0,0,1) $ and  $ n = \hat{n} = (0,1,0) $ are constant. For a visualisation of the starting configurations, see the first graphic in \autoref{figure_line_arrows}.
    %

To simulate a homogeneous magnetic field that forces the LCE-director $ n $ to align with a vector $ f \in \R^3 $, we add the forcing term $ -\left( f , R \hat{n} \right)_{L^2(\omega)} $ to the energy. The computations of $ y $, $ b $ and $ \hat{n} $ are modified accordingly. We split the (quasi-) time interval $ [0,T] $ with $ T = 60 $ into the smaller intervals $ I_m = (10(m-1),10m] $ for $ m = 1,\dots,6 $. Since we are interested in the LCE-director's influence on bending behaviour, we choose the external field to change periodically between two constant states given by $ f_{\operatorname{odd}} = (0,1,0) $ and $ f_{\operatorname{even}} = (1,0,0) $. For $ t\in (0,T) $ and $ x_1 \in \omega $ we thus define
\begin{equation*}
  f(t,x_1) := \begin{cases}
    f_{\operatorname{odd}} & \text{if } t \in I_m, m \text{ is odd},\\
    f_{\operatorname{even}} & \text{if } t \in I_m, m \text{ is even}.
  \end{cases}
\end{equation*}
The remaining parameters we use are $ \bar{r} = \frankconstant{} = 1 $.
\begin{figure}
  \centering
  \includegraphics[width=4.7cm]{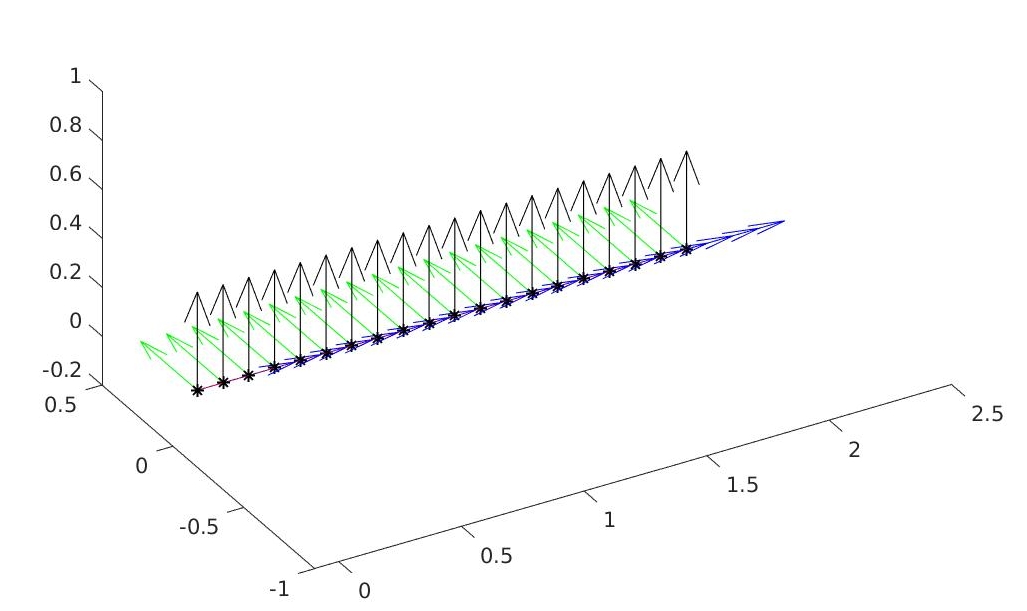}
  \includegraphics[width=4.7cm]{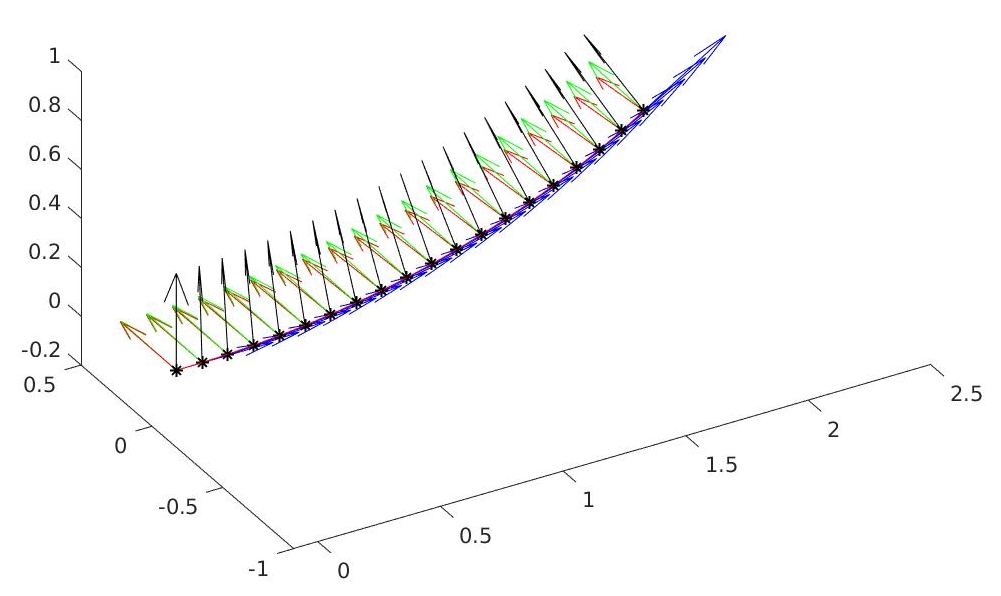}
  \includegraphics[width=4.7cm]{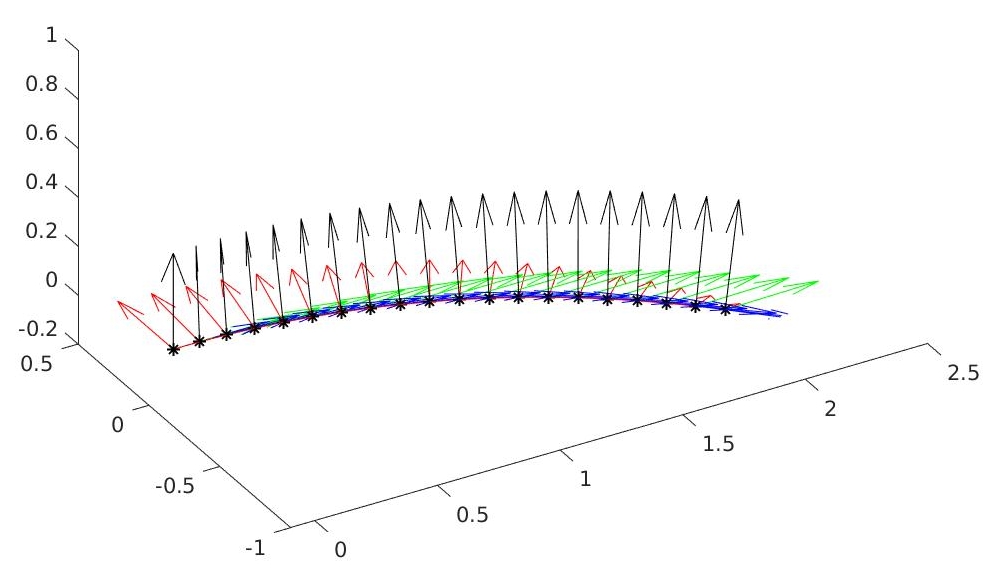}
  \includegraphics[width=1.3cm]{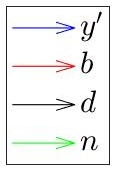}
  \caption{Deformation of the rod in Example~\ref{ex_magn_field}. From left to right we have $ t = 0 $, $ t = 30 $ and $ t = 40 $. The different coloured arrows represent the vectors $ y' $, $ b $ and $ d = y' \wedge b $, that form the frame $ R $, as well as the LCE-director $ n = R\hat{n} $. The two rightmost graphics show, that the LCE-director $ n $ tends to align with the vectors $ 	f_{\operatorname{odd}} = (0,1,0) $ or $ 	f_{\operatorname{even}} = (1,0,0) $ at the end of the intervals $ I_3 $ and $ I_4 $ respectively.}
  \label{figure_line_arrows}
\end{figure}
\begin{figure}
  \centering
  \includegraphics[width=12cm]{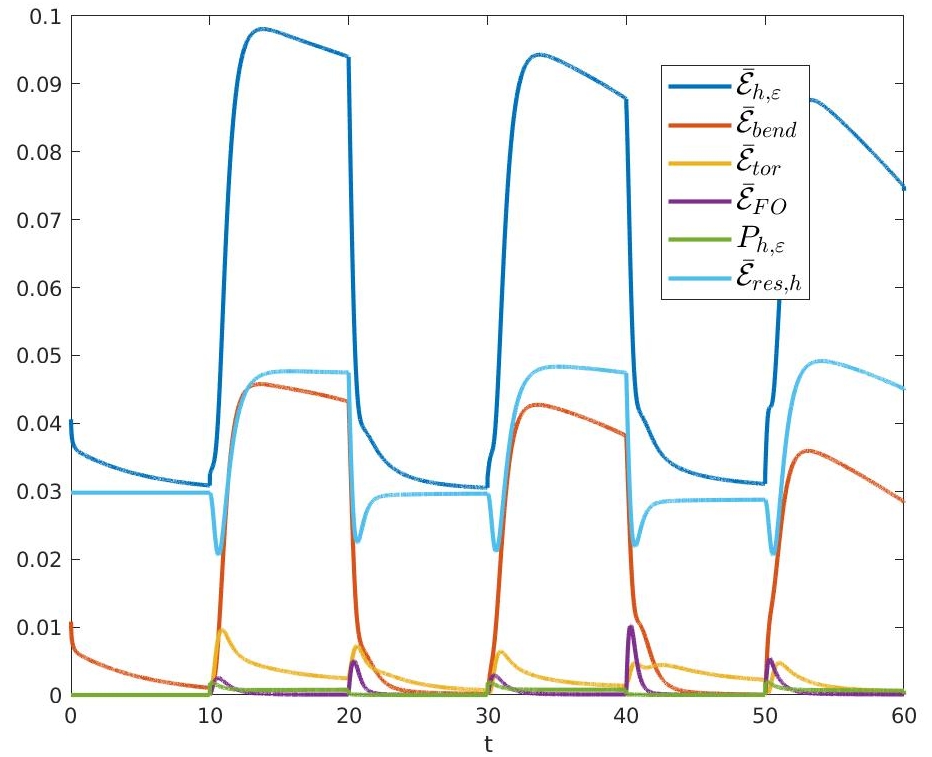}
  \caption{Energy development over the (quasi-) time interval in Example~\ref{ex_magn_field}. The different curves represent the energy term $ \bar{\mathcal{E}}_{h,\eps} $, the bending energy, the torsion energy, the Frank-Oseen energy, the penalty term $ P_{h,\eps} $ and the contribution of $ \bar{\mathcal E}_{\operatorname{res},h} $.	The energy is non-decreasing in some parts. This is due to the fact that the energy functional $ \bar{\mathcal E}_{h,\eps} $ does not include the forcing term $ -\left( f , R \hat{n} \right)_{L^2(\omega)} $, which was added to simulate a magnetic field, and it takes some time until $ n $ and $ f $ are realigned after $ f $ changes.}
  \label{figure_line_energy}
\end{figure}

\begin{figure}
  \centering
  \includegraphics[width=7cm]{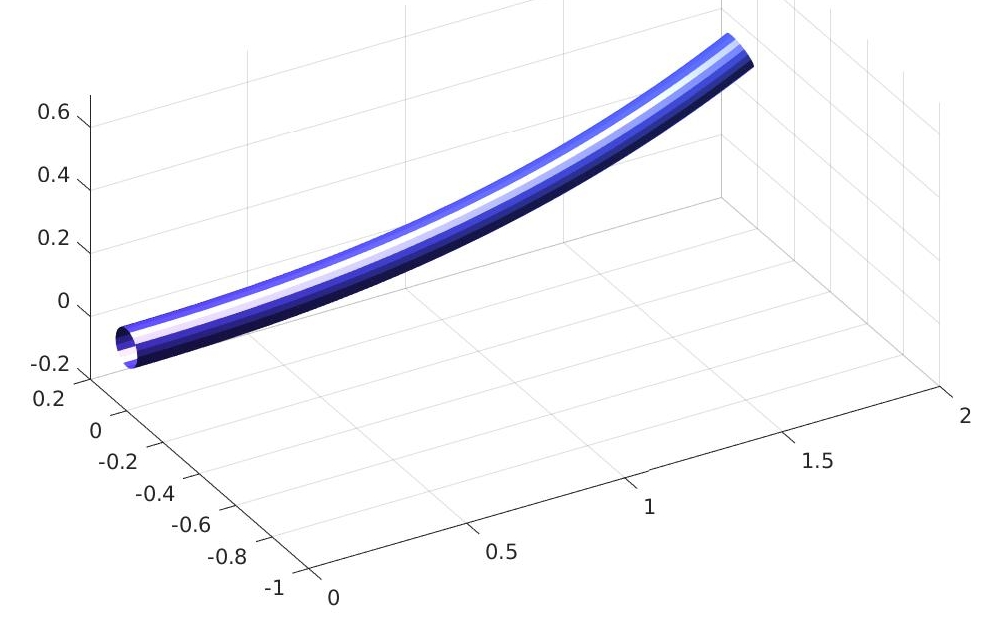}
  \includegraphics[width=7cm]{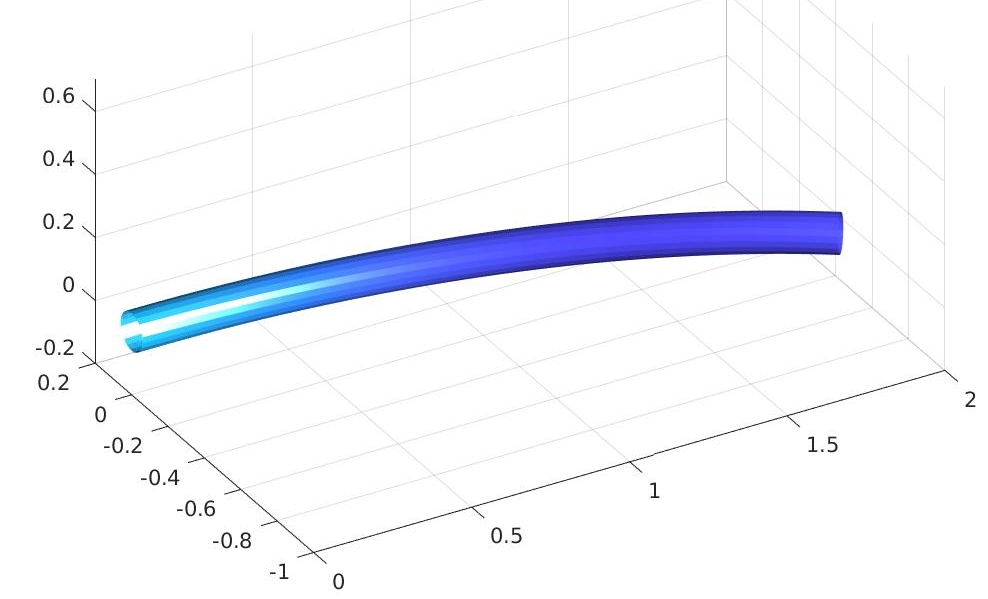}
  \includegraphics[width=7cm]{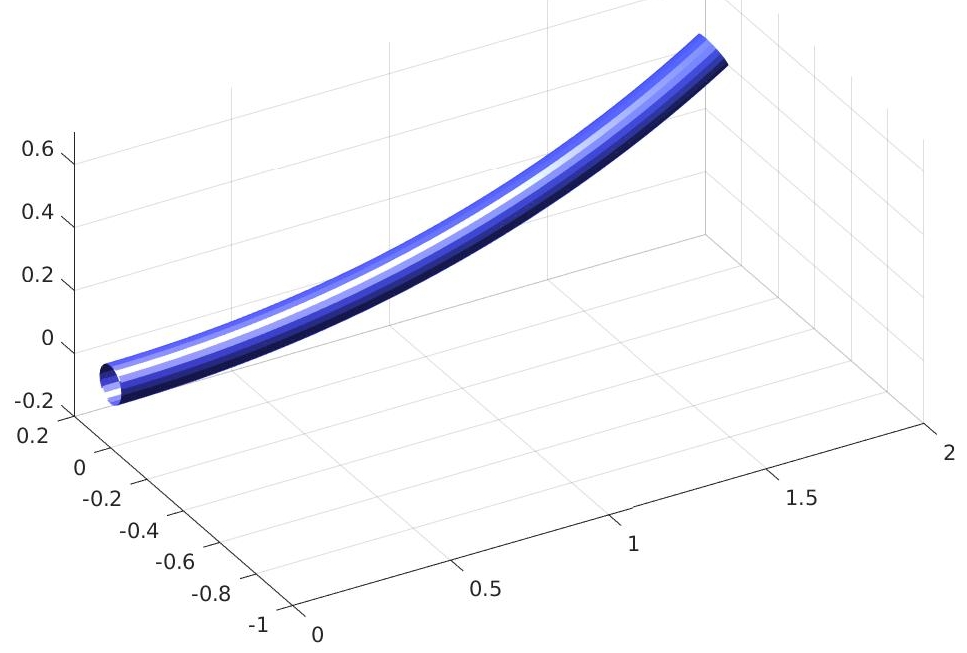}
  \includegraphics[width=7cm]{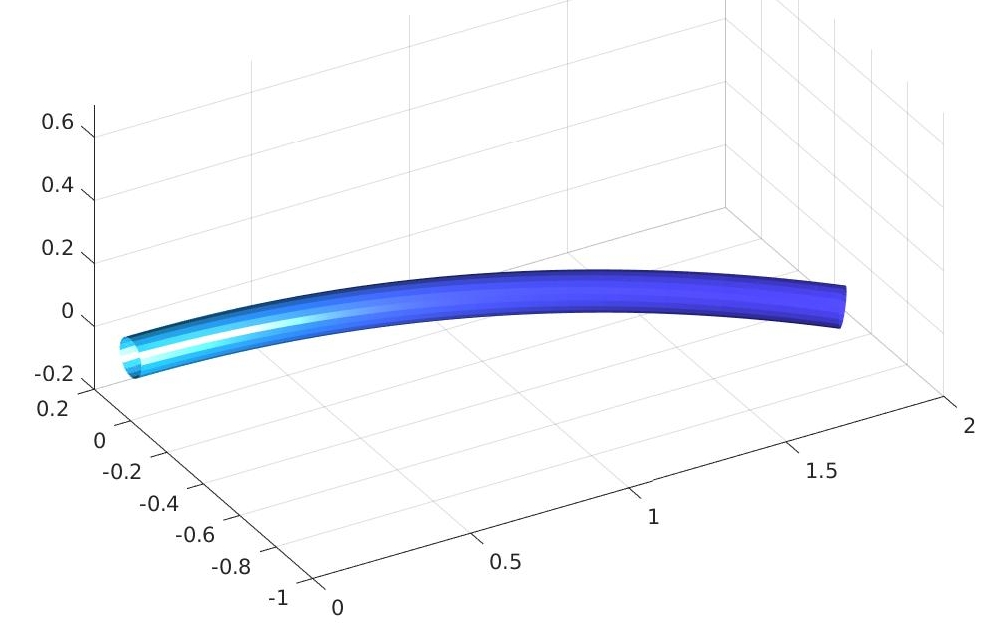}
  \caption{Deformation of the rod from Example~\ref{ex_magn_field}. The top row shows the configurations at $ t = 10 $ and $ t = 20 $, while the bottom row shows $ t = 30 $ and $ t = 40 $. The colouring of the tube represents its curvature.}
  \label{figure_line_development}
\end{figure}

In the Figures \ref{figure_line_energy} and \ref{figure_line_development} we can see the development of energy and deformation within $ (0,T) $. For both values of $ f $ we observe a different deformation the rod seems to converge to, basically enabling us to switch between two states. However, the deformations and energy at the end of the different time intervals corresponding to the same value of $ f $ are slightly different. This can be explained by the fact, that these intervals are too short for a full relaxation and convergence to the minimizer. Indeed we observe smaller differences when using longer time intervals.

\begin{figure}[t]
  \centering
  \includegraphics[width=10cm]{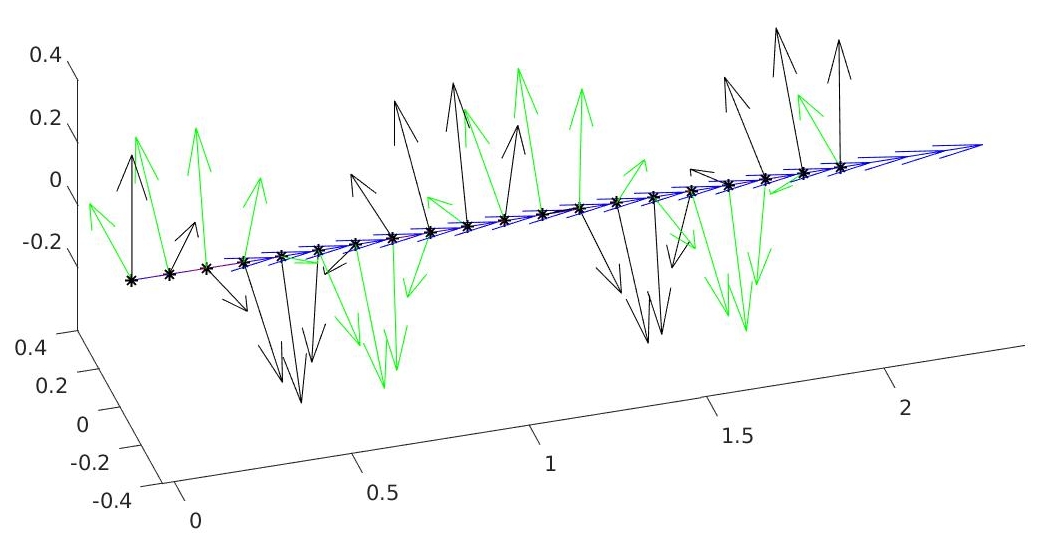}
  \includegraphics[width=1.5cm]{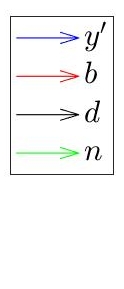}
  \includegraphics[width=7cm]{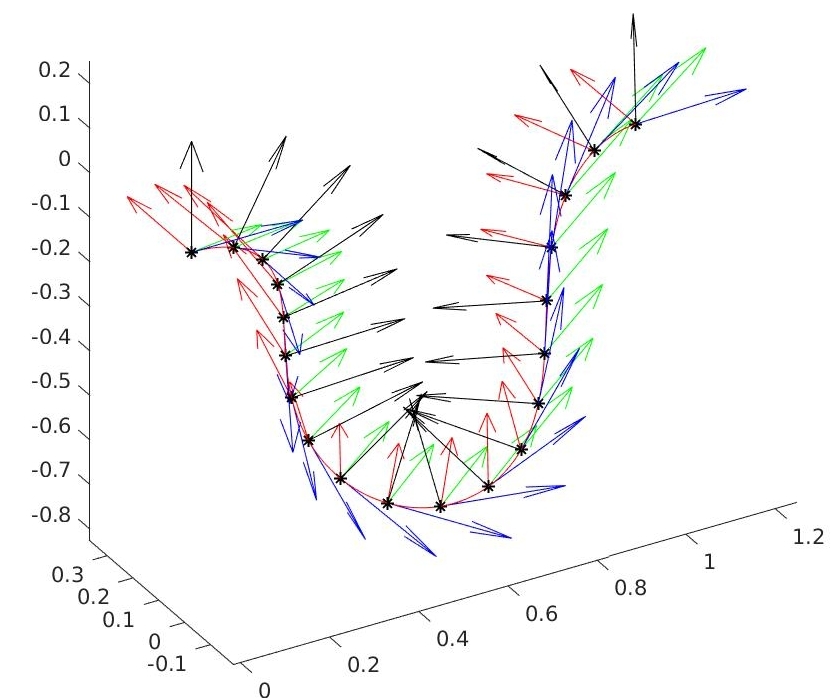}
  \includegraphics[width=7cm]{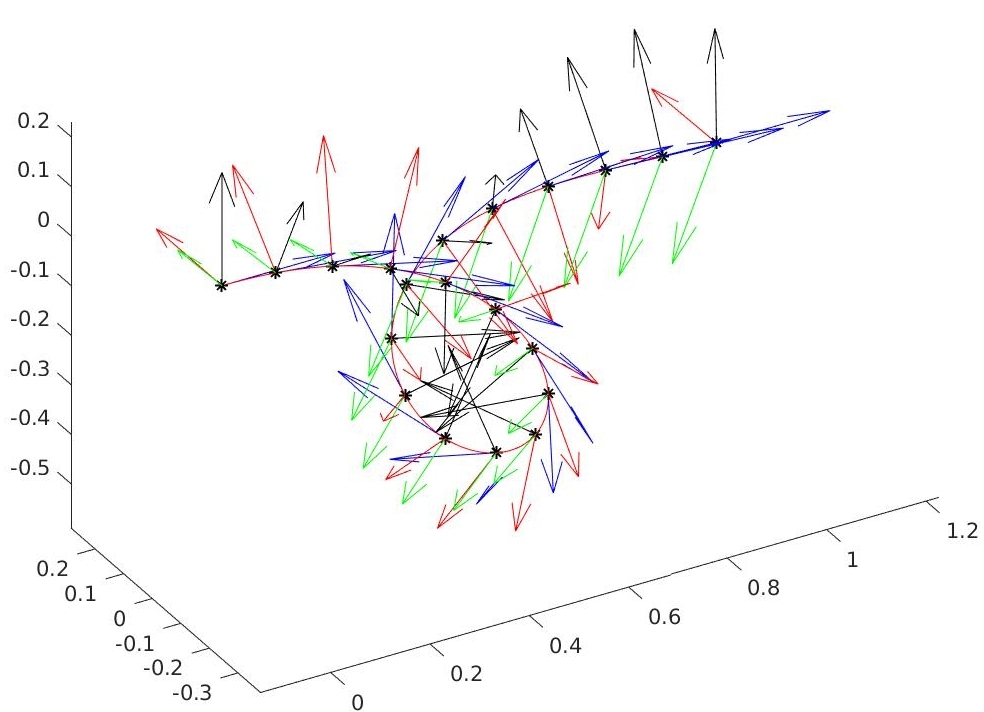}
  \caption{Buckling behaviour of the rod from Example~\ref{ex_buckling}. The top row shows the starting configuration while the bottom row depicts the relaxed state at time $ t = 50 $ with $ \bar{r} = 1 $ (left) and $ \bar{r} = 3 $ (right). As in \autoref{figure_line_arrows}, the vectors $ y' $, $ b $ and $ d = y' \wedge b $, that form the frame $ R $, as well as the LCE-director $ n = R\hat{n} $ are shown.}
  \label{figure_buckling_arrows}
\end{figure}
\example[Buckling]\label{ex_buckling}
In our second experiment, we aim to investigate the buckling behaviour of an LCE-rod. Again, we have a straight line from $ (0,0,0) $ to $ (2,0,0) $. This time however, both ends are clamped and the rod is twisted twice. The initial LCE-director is $ n = R\hat{n} = b $. This configuration can be seen in the first graphic of \autoref{figure_buckling_arrows}.
    %

In order to induce buckling behaviour, we modify the boundary conditions to move the end at $ (2,0,0) $ towards $ (0,0,0) $ with a velocity of $ (-1,0,0) $. At $ t = 1 $, when the end on the right-hand side is located at $ (1,0,0) $, we stop this movement and let the rod relax until $ T = 50 $. Since we are interested in the influence of the parameter $ \bar{r} $, we fix $\frankconstant{} = 0.4 $ and carry out the experiment for $ r = 1,\dots,10 $.
\begin{figure}
  \centering
  \includegraphics[width=11cm]{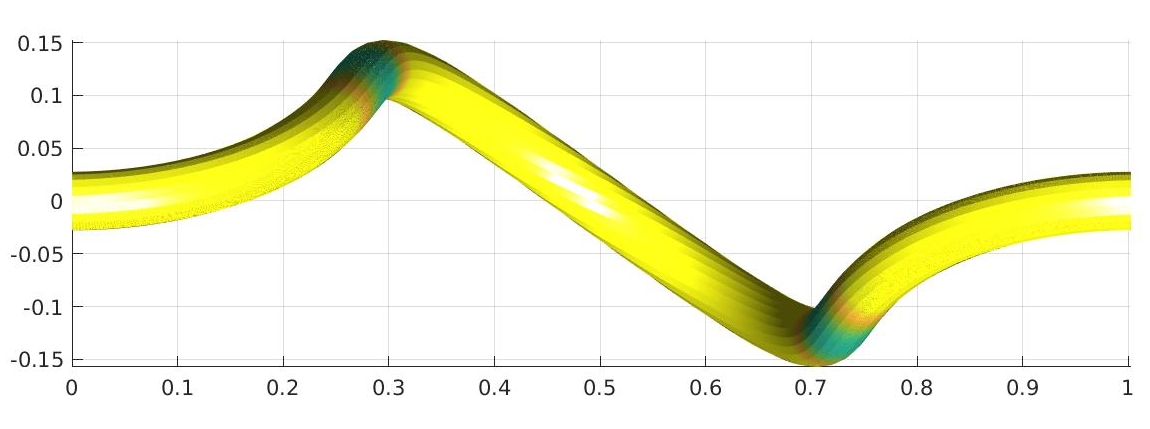}
  \includegraphics[width=11cm]{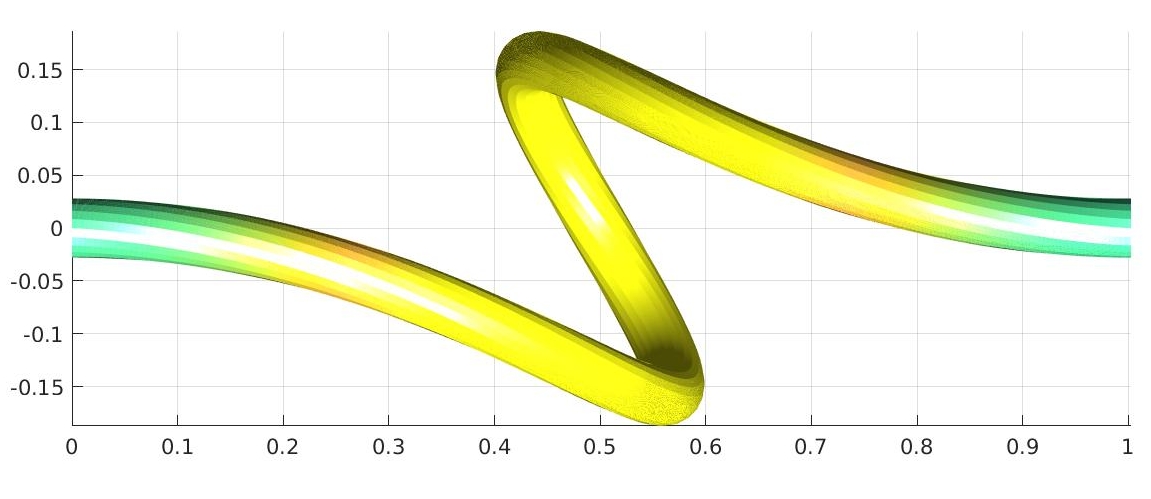}
  \includegraphics[width=11cm]{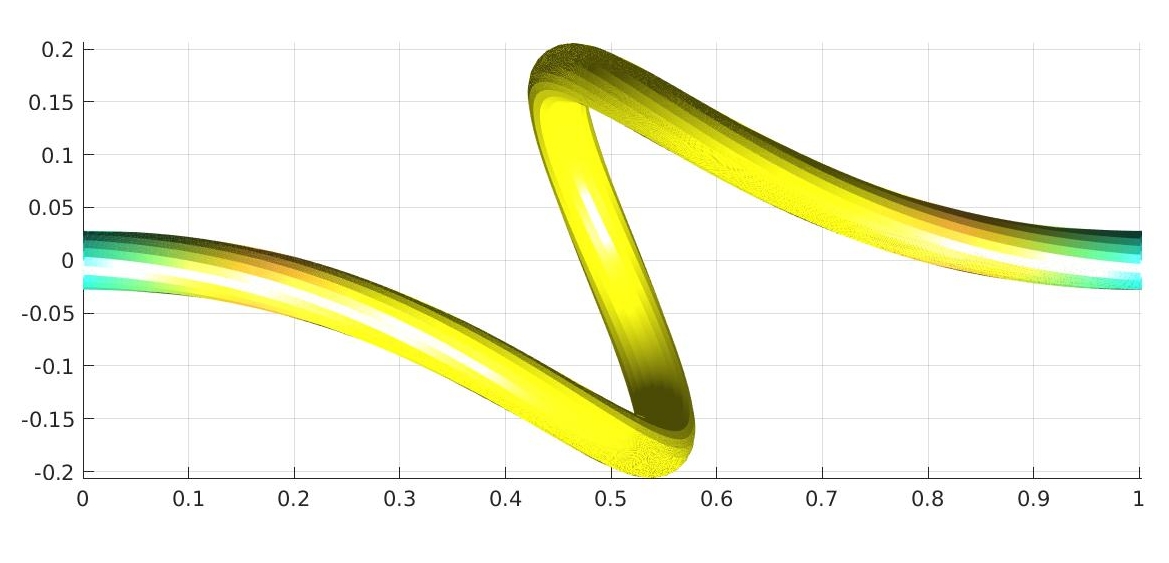}
  \caption{Buckling of the LCE-rod in Experiment~\ref{ex_buckling} with $ \bar{r} = 1,3,5 $ (top to bottom). The line of view is parallel to the $ x_3 $-axis and the colouring of the tube represents its curvature.}
  \label{figure_buckling_end}
\end{figure}

\begin{figure}
  \centering
  \includegraphics[width=7.5cm]{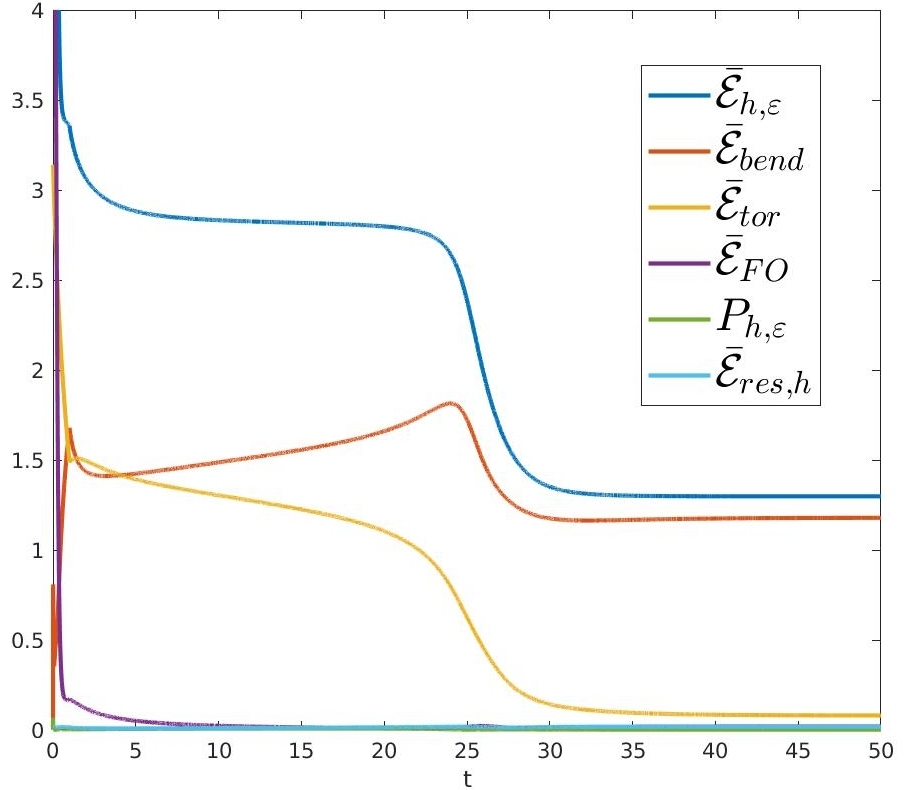}
  \includegraphics[width=7.5cm]{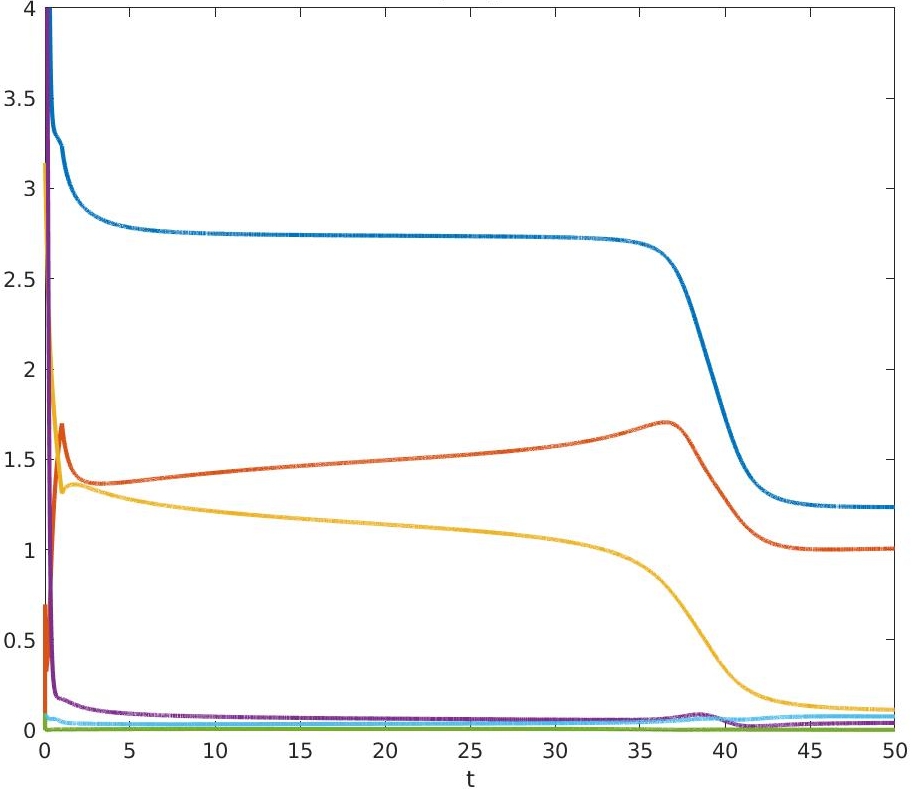}
  \includegraphics[width=7.5cm]{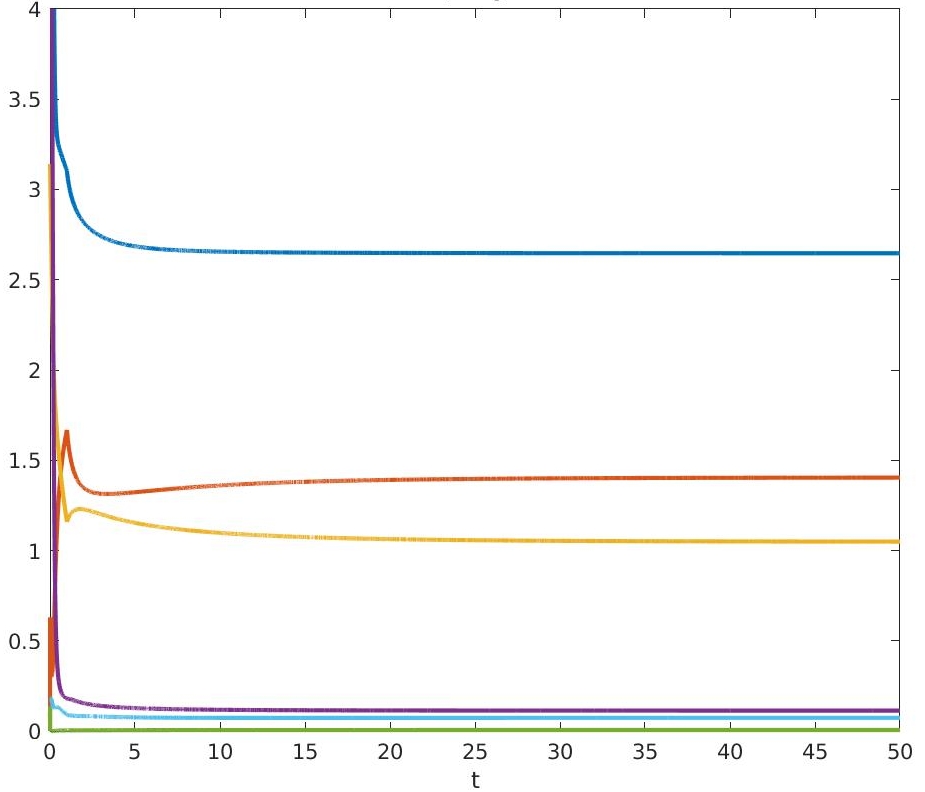}
  \includegraphics[width=7.5cm]{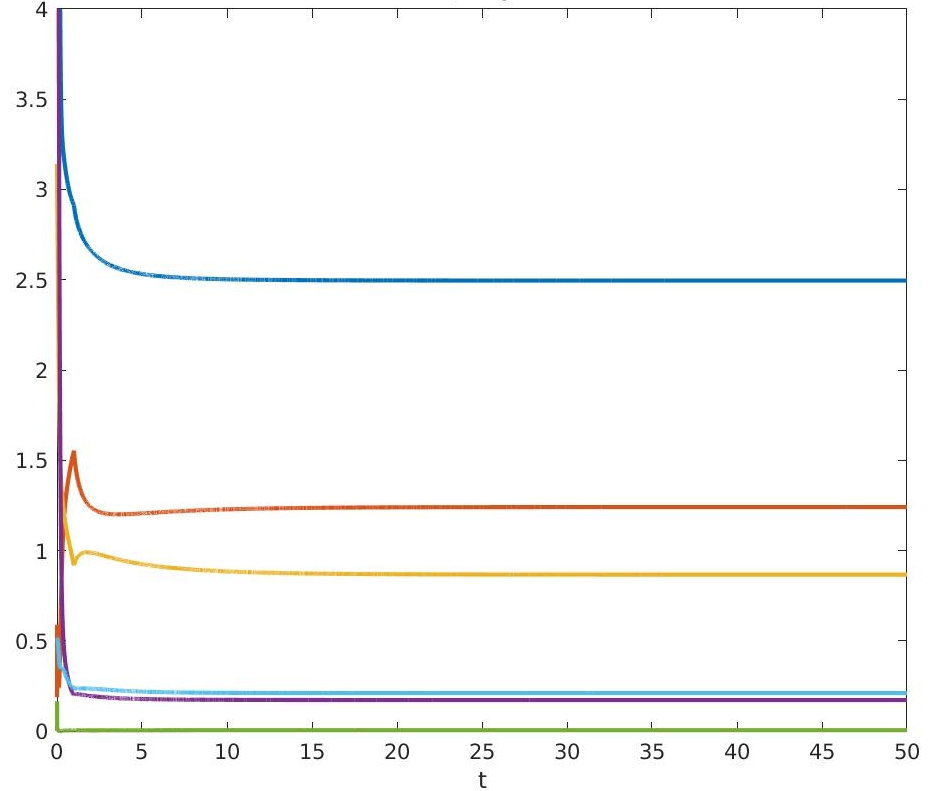}
  \caption{Energy development in Example~\ref{ex_buckling}. The top row shows the experiments with $ \bar{r} = 1 $ and $ \bar{r} = 2 $, while the bottom row shows $ \bar{r} = 3 $ and $ \bar{r} = 5 $. Again we see the total energy $ \bar{\mathcal{E}}_{h,\eps} $, the bending energy, the torsion energy, the Frank-Oseen energy, the penalty term $ P_{h,\eps} $ and the contribution of $ \bar{\mathcal E}_{\operatorname{res},h} $.}
  \label{figure_buckling_energy}
\end{figure}
Figures \ref{figure_buckling_arrows} and \ref{figure_buckling_end} show the deformation at the end of the time frame for various values of $ \bar{r} $. We observe a significant difference between the choice $ \bar{r} = 1 $ and the other cases. In order to understand the dependence, we take a look at the energy plots which can be seen in \autoref{figure_buckling_energy}. The cases $ \bar{r} = 1 $ and $ \bar{r} = 2 $ have a steep decline of mostly the torsion energy at some point after the curve initially flattens. In the other cases, this decline is not present within the given time frame. (Indeed, even when choosing a significantly longer time frame, the decline does not happen for $ \bar{r} \geq 3 $.)

\begin{figure}
  \centering
  \includegraphics[width=11cm]{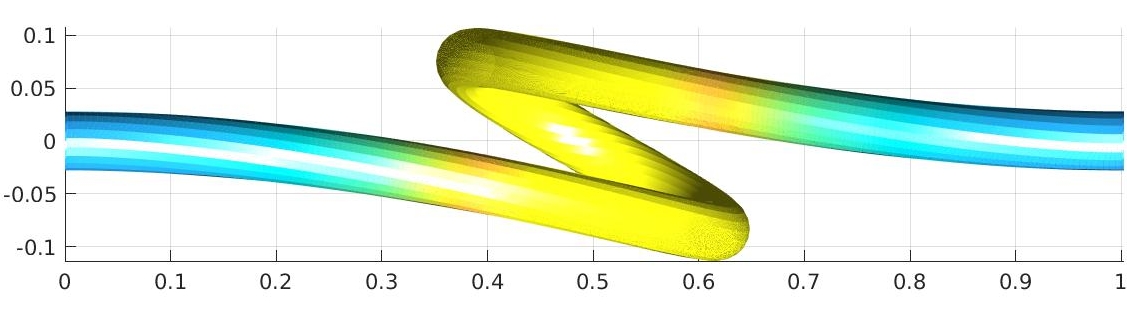}
  \includegraphics[width=11cm]{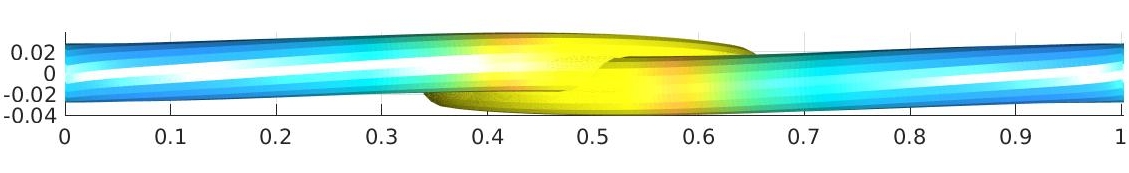}
  \includegraphics[width=11cm]{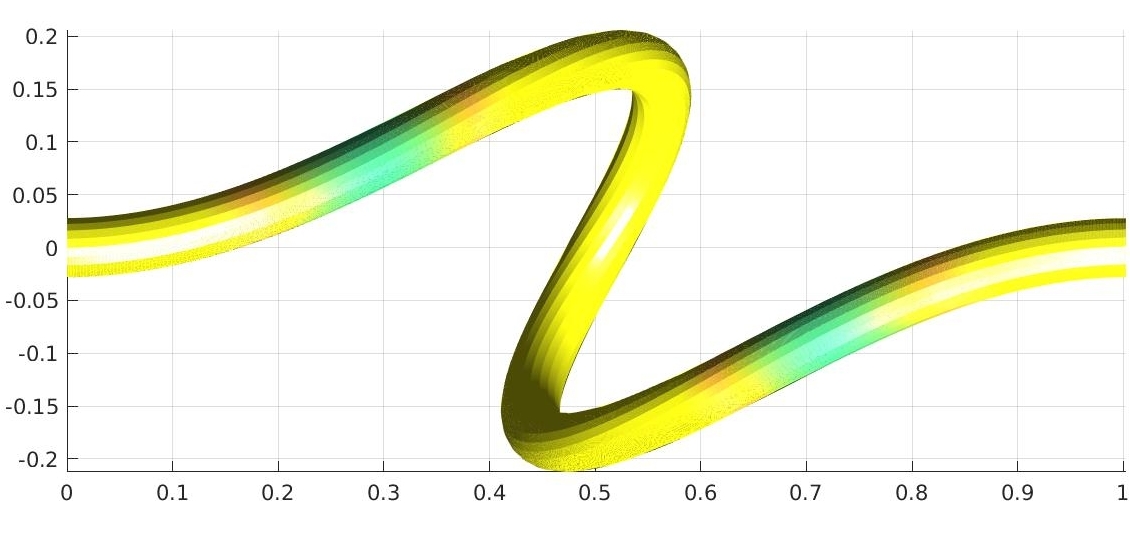}
  \caption{Buckling behaviour of the LCE-rod from Example~\ref{ex_buckling}. We have $ \bar{r} = 1 $ and see the configurations at $ t = 22,24,26 $ (top to bottom). The rod intersects itself to reach a more beneficial energetic state.}
  \label{figure_buckling_dropoff}
\end{figure}
The reason for the aforementioned decline is shown in \autoref{figure_buckling_dropoff}. The rod reaches a more beneficial energetic state by effectively folding over. In doing so, it intersects itself, which is possible due to the lack of a self-avoidance term in our model, but not concerning when we are just interested in minimizing the energy. In reality, the rod would likely be stopped by itself and form a loop, but the behavioural differences based on the parameter $ \bar{r} $ would still be significant.

In these two examples, we have seen that the competing contributions of the different energy terms in the LCE rod model result in non-trivial deformations. Additionally, the LCE material has a significant impact on the configuration that minimizes the energy. On the one hand, we are able to change the bending behaviour of a rod by directly influencing the LCE-director via a simulated magnetic field. On the other hand, we see a change in buckling behaviour when the model parameter $ \bar{r} $ is modified. In practice, this could be realized by altering the surrounding temperature.
\newpage

\section{Proofs}\label{S:proof}
\subsection{Compactness -- Proof of Theorem~\ref{T1} (a) and Proposition~\ref{P1} (a)}
In this step we prove \eqref{eq:comp1}, \eqref{eq:comp2}. Furthermore, we show that there exists $R_h:\Omega\to\SO 3$ and a corrector $    \chi\in L^2(\omega; \mathbb H_{\rm rel})$
such that (for a subsequence) we have
\begin{equation}\label{eq:comp3}
  \frac{R_h^\top L_h(n_h)^{-\frac12}\nabla_hy_h-I}{h}\wto \sym\Bigg((R^\top\partial_1R\barx)
  \otimes e_1\Bigg)+\tfrac{\bar r}{2}\chi_{\Omega_0}R^\top(\tfrac13 I-n\otimes n)R+\chi.
\end{equation}
Indeed, by the argument of  \cite[Lemma 4.1]{bartels2022nonlinear} we see that $\limsup\limits_{h\to 0}\mathcal E_h(y_h,n_h)<\infty$ implies
\begin{align}
  \label{eq:st:2d}
  &\limsup\limits_{h\to 0}\frac1{h^2}\int_{\Omega}\dist^2(\nabla\!_hy_h,\SO 3)<\infty,\\
  \label{eq:st:2c}
  &\limsup\limits_{h\to 0}\int_{\Omega}|\nabla\!_hy_h|^{q}+|\det(\nabla\!_hy_h)|^{-\frac{q}{2}}<\infty,\\
  \label{eq:st:2b}
  &\limsup\limits_{h\to0}\int_{\Omega_0}|\nabla\!_hn_h(\nabla\!_hy_h)^{-1}|^2|\det(\nabla\!_h y_h)|<\infty,\\\label{eq:st:2a}
  &\limsup\limits_{h\to0}\int_{\Omega_0}|\nabla\!_hn_h|^p<\infty,\qquad\text{where }p=\frac{2q}{q+4},
\end{align}
and we find $\bar h>0$ (depending on the sequence) such that
\begin{equation}\label{eq:st:0}
  \det(\nabla\!_hy_h)>0\text{ a.e.~in }\Omega\qquad\text{for all }0<h\leq\bar h.
\end{equation}
From now on we assume that $0<h\leq\bar h$.

Thanks to \eqref{eq:st:2a} we find $n\in H^1(\omega;\mathbb S^2)$ such that (up to a subsequence) we have $n_h\wto n$ weakly in $W^{1,p}(\Omega_0)$ and pointwise a.e.~in $\Omega_0$. Since $(n_h)$ is bounded in $L^\infty$, \eqref{eq:comp2} follows.
Furthermore, by appealing to \cite[Proposition 5.1 (a)]{bauer2019derivation} we conclude that there exists a subsequence (not relabeled), and a rod configuration $(y,R)$ such that \eqref{eq:comp1} holds and 
\begin{equation}\label{eq:comp5}
  E_h(y_h)\wto E:=\sym\Bigg((R^\top\partial_1R\barx)
  \otimes e_1\Bigg)+\chi\qquad\text{weakly in }L^2,
\end{equation}
for a corrector $\chi\in L^2(\omega; \mathbf H_{\rm rel})$.
To prove \eqref{eq:comp3} we proceed as follows: In view of \eqref{eq:st:0} and thanks to the polar decomposition we find $R_h:\Omega\to\SO 3$ such that
$$\nabla_hy_h(x)=R_h(x)\sqrt{\nabla_hy_h(x)^\top\nabla_hy_h(x)}.$$
We set $G_h:=\frac{R_h^\top\nabla_hy-I}{h}$ and note that \eqref{eq:comp5} implies $G_h\wto E$ weakly in $L^2$ and $R_h\to R$ strongly in $L^2$.
Next, we rewrite the elastic part of the strain as follows,
\begin{equation*}
  \Big(I+\chi_{\Omega_0}(L_h(n_h)^{-\frac{1}{2}}-I)\Big)\nabla_hy_h=R_h(I+h\widetilde B_h)(I+h G_h),\qquad \widetilde B_h:=\chi_{\Omega_0} R_h^\top \big(\frac{L_h(n_h)^{-\frac{1}{2}}-I}{h}\big) R_h.
\end{equation*}
In view of the construction of $L_h^{-\frac12}$, the boundedness of $n_h$, and the pointwise convergence of $n_h$ and $R_h$ a.e.~in $\Omega_0$, we deduce that $(\widetilde B_h)$ is bounded in $L^\infty(\Omega)$, and
\begin{equation*}
  \widetilde B_h\to \tfrac{\bar r}{2}\chi_{\Omega_0}R^\top(\tfrac13 I-n\otimes n)R\qquad\text{in }L^2(\Omega).
\end{equation*}
In combination with $G_h\wto E$, \eqref{eq:comp3} follows.
\qed

\subsection{Lower bound -- Proof of Theorem~\ref{T1} (b) }
By passing to a subsequence (not relabeled) and by appealing to compactness in form of Theorem~\ref{T1} (a), we may assume without loss of generality that \eqref{eq:comp3} holds, and
\begin{equation*}
  \liminf\limits_{h\to 0}\mathcal E_h(y_h,n_h)=    \lim\limits_{h\to 0}\mathcal E_h(y_h,n_h)<\infty.
\end{equation*}
From \eqref{eq:comp3} we infer with the argument of \cite[Proof of Theorem~2.3(b)]{bartels2022nonlinear} that
\begin{align*}
  &\liminf\limits_{h\to 0}\frac{1}{h^2|S|}\Big(\int_{\Omega\setminus\Omega_0}W\Big(\nabla_hy_h(x)\Big)\,dx+\int_{\Omega_0}W\Big(L_h(n_h(x))^{-\frac12}\nabla_hy_h(x)\Big)\,dx\Big)\\
  \geq & \frac{1}{|S|}\int_{\Omega}Q\Big(\sym\Big((R^\top\partial_1 R\,\barx)
         \otimes e_1\Big)+\tfrac{\bar r}{2}\chi_{\Omega_0}R^\top(\tfrac13 I-n\otimes n)R+\chi\Big)\,dx\\
  \geq & \int_{\omega}\min_{\chi\in {\Hrel}}\fint_{S}Q\Big(\sym\Big((R^\top \partial_1R\,\barx)
         \otimes e_1\Big)+\tfrac{\bar r}{2}\chi_{S_0}R^\top(\tfrac13 I-n\otimes n)R+\chi\Big)\,d\bar x\,dx_1.
\end{align*}
We combine this with Lemma~\ref{L:relax} to deduce
\begin{equation}\label{eq:LB:1}
  \begin{aligned}
    &\liminf\limits_{h\to 0}\frac{1}{h^2|S|}\Big(\int_{\Omega\setminus\Omega_0}W\Big(\nabla_hy_h(x)\Big)\,dx+\int_{\Omega_0}W\Big(L_h(n_h(x))^{-\frac12}\nabla_hy_h(x)\Big)\,dx\Big)\\
    \geq & \int_{\omega}\bar Q\Big(R^\top \partial_1R+\bar r K_{\pre}\big(\tfrac13 I-R^\top n\otimes R^\top n\big)\Big)+\bar r^2 E_{\res}\big(\tfrac13 I-R^\top n\otimes R^\top n\big)\,dx_1.
  \end{aligned}
\end{equation}
It remains to treat the Frank-Oseen energy. By \eqref{eq:comp1} and \eqref{eq:comp2} we have $(\nabla_hy_h,n_h)\to (R,n)$ in $L^2$. Furthermore, by the a propri bound \eqref{eq:st:2a} we have (up to a subsequence and for some $p\in(1,2)$) $\nabla_h n_h\to (\partial_1n,d)$ in $L^p$ where $d\in L^p(\Omega_0;\R^{3\times 2})$. As in the proof of \cite[Theorem~2.3 (a)]{bartels2022nonlinear} we conclude that $\nabla_hn_h\nabla_hy_h^{-1}(\det\nabla_hy_h)^{\frac12}\wto (\partial_1n,d)R^{\top}$ weakly in $L^p$, and thus, by lower semicontinuity, we get
\begin{eqnarray*}
  \liminf\limits_{h\to 0}\frac{1}{|S_0|}\int_{\Omega_0}|\nabla_hn_h\nabla_hy_h^{-1}|^2\det\nabla_hy_h\,dx&\geq& \int_\omega\fint_{S_0}|(\partial_1n,d)R^{\top}|^2\,d\bar x\,dx_1\\
                                                                                                       &\geq&\int_\omega|\partial_1n|^2\,dx_1,
\end{eqnarray*}
where for the last step we used $|(\partial_1n,d)R^{\top}|^2=|(\partial_1n,d)|^2\geq |\partial_1n|^2$. In combination with \eqref{eq:LB:1} the claimed lower bound follows.
\qed

\subsection{Upper bound -- Proofs of Theorem~\ref{T1} (c) and Proposition~\ref{P1} (b)}\label{S:proof:UB}
The statements of Theorem~\ref{T1} (c) and Proposition~\ref{P1} (b) directly follow from the following stronger statement (which we shall also use in the proof of Proposition~\ref{P2}): For all $(y,R,n)\in\mathcal A$ and $\beta\in[0,1)$ there exists a sequence $(y_h)\subset C^\infty(\bar\Omega;\R^3)$ and $n_h\in H^1(\Omega_0;\mathbb S^2)$ such that
\begin{equation}
  \label{eq:UB:eq0}
  y_h(0,\bar x)=y(0)+hR(0)\barx,\qquad 
\end{equation}
and
\begin{equation}\label{eq:diagonal}
  \begin{aligned}
    \limsup\limits_{h\to 0}&\Big(\Big|\mathcal E(y,R,n)-\mathcal E_h(y_h,n_h)\Big|+\|y_h-y\|_{L^2}+\|n_h-n\|_{H^1}\\
    &\qquad +h^{-\beta}\|\nabla_hy_h-R\|_{L^\infty} + h^{-\beta}\Big\|\tfrac{(\nabla_hy_h)^\top n_h}{|(\nabla_hy_h)^\top n_h|}-R^\top n\Big\|_{L^\infty}=0.
  \end{aligned}
\end{equation}
For the proof we proceed in two steps.

\step 1 (Approximation argument).

We first argue that for each $k\in\N$ we can choose $(y^k,R^k,n^k)\in\mathcal A$ such that
$y^k,R^k\in C^\infty(\overline\omega)$,
\begin{equation}\label{eq:UB:eq1}
  y^k(0)=y(0),\qquad R^k(0)=R(0),\qquad (R^k)^\top n^k=R^\top n,
\end{equation}
and
\begin{equation}\label{eq:UB:close}
  \Big|\mathcal E(y,R,n)-\mathcal E(y^k,R^k,n^k)\Big|+\|y-y^k\|_{H^2}+\|R-R^k\|_{L^\infty}+\|\partial_1R-\partial_1R^k\|_{L^2}+\|n-n^k\|_{H^1}<\frac1{k}.
\end{equation}
Indeed, in view of the approximation result \cite[Lemma~2.3]{neukamm2012rigorous} for all $\delta>0$ there exists a smooth framed curve $(y^k,R^k)$ satisfying the one-sided boundary condition \eqref{eq:UB:eq1} such that $\|y^k-y\|_{H^2}+\|R^k-R\|_{L^\infty}+\|\partial_1R^k-\partial_1R\|_{L^2}<\delta$. Set $n^k:=R^kR^\top n$. Then \eqref{eq:UB:eq1} is satisfied and $\|\partial_1n^k-\partial_1 n\|_{L^2}\leq \delta(\|\partial_1n\|_{L^2}+1)$. Since $\mathcal E$ is continuous, \eqref{eq:UB:close} follows by choosing $\delta$ sufficiently small.

Set $K^k:=(R^k)^\top\partial_1 R^k$ and $U^k=\frac13 I-(R^k)^\top n^k\otimes (R^k)^\top n^k$. By Lemma~\ref{L:relax} and an approximation argument we find $a^k\in C^\infty_c(\omega)$ and $\varphi^k\in C^\infty_c(\omega; C^\infty(\bar S;\R^3))$ such that
\begin{equation}\label{eq:UB:eq2}
  \begin{aligned}
    &\Bigg|\mathcal E(y^k,R^k,n^k) -\frac{1}{|S|}\int_\Omega Q\Big(\bar x,\sym\big(K^k\barx \otimes e_1+\tfrac{\bar r}{2}  \boldsymbol{1}_{S_0}U^k+(a^ke_1,\bar\nabla\varphi^k)\big)\Big)\,dx\\
    &\qquad -\frankconstant{}^2\int_\omega|\partial_1n^k|^2\,dx_1\Bigg|<\frac1{k}.
  \end{aligned}
\end{equation}

\step 2 (Definition of the recovery sequence).

Let $y^k,R^k,a^k,\varphi^k$ be as in Step~1, and define for $h>0$ the smooth 3d-deformation
\begin{equation*}
  y^k_h(x):=y^k(x_1)+hR^k(x_1)\barx +h\int_0^{x_1}a^k(s)R^k(s)e_1\,ds+h^2R^k(x_1)\varphi^k(x).
\end{equation*}
A direct calculation yields   
\begin{equation}
  \begin{aligned}
    \frac{R_k^\top\nabla_hy_h^k-I}{h}=\,&K^k\barx \otimes e_1+ (a^ke_1,\bar\nabla\varphi^k) +hO^k_h,\\
    \frac{R_k^\top   \Big(I+\chi_{\Omega_0}(L_h(n^k)^{-\frac{1}{2}}-I)\Big)\nabla_hy_h^k-I}{h}=&\,K^k\barx \otimes e_1+ (a^ke_1,\bar\nabla\varphi^k)\\
    &+\tfrac{\bar r}{2}\chi_{\Omega_0}(\tfrac13 I-(R^k)^\top n^k\otimes (R^k)^\top n^k+o^k_h,
  \end{aligned}
\end{equation}
with remainders $O^k_h, o^k_h$ satisfying $\limsup_{h\to 0}\|O^k_h\|_{L^\infty}<\infty$ and $\lim_{h\to 0}\|o^k_h\|_{L^\infty}=0$.
Hence, by frame-indifference and an expansion at identity we get
\begin{align*}
  &\lim\limits_{h\to 0}\Big(\frac{1}{h^2|S|}\int_{\Omega\setminus\Omega_0}W\Big(\bar x,\nabla_hy_h^k\Big)\,dx+\frac{1}{h^2|S|}\int_{\Omega_0}W\Big(\bar x,L_h(n^k)^{-\frac12}\nabla_hy^k_h(x)\Big)\,dx\Big)\\
  =&\lim\limits_{h\to 0}\Big(\frac{1}{h^2|S|}\int_\Omega W\Big(\bar x,\big(I+\chi_{\Omega_0}(L_h(n^k)^{-\frac{1}{2}}-I)\big)\nabla_hy_h^k\Big)\,dx\\
  = & \frac{1}{|S|}\int_\Omega Q\Big(\bar x,\sym\big(K^k\barx \otimes e_1+\tfrac{\bar r}{2}  \boldsymbol{1}_{S_0}U^k+(a^ke_1,\bar\nabla\varphi^k)\big)\Big)\,dx,
\end{align*}
and
\begin{align*}
  &\lim\limits_{h\to 0}\frac{\frankconstant{}^2}{|S_0|}\int_{\Omega_0}\big|\nabla_hn^k(\nabla_hy_h^k)^{-1}\big|^2\det(\nabla_hy_h^k)\,dx=\frankconstant{}^2\int_\omega|\partial_1n^k|^2\,dx_1.
\end{align*}
Together with \eqref{eq:UB:close} and \eqref{eq:UB:eq2} we thus conclude that for any $\beta\in[0,1)$,
\begin{equation*}
  \begin{aligned}
    \limsup\limits_{k\to\infty}\limsup\limits_{h\to 0}&\Big(\Big|\mathcal E(y,R,n)-\mathcal E_h(y^k_h,n^k)\Big|+\|y^k_h-y\|_{L^2}+\|n^k-n\|_{H^1}\\
    &\qquad +h^{-\beta}\|\nabla_hy^k_h-R\|_{L^\infty} + h^{-\beta}\Big\|\tfrac{(\nabla_hy_h^k)^\top n^k}{|(\nabla_hy_h^k)^\top n^k|}-R^\top n\Big\|_{L^\infty}=0
  \end{aligned}
\end{equation*}
We thus obtain the sought for sequence $(y_h,n_h)$  by extracting a diagonal sequence.
\qed

\subsection{Anchoring -- Proof of Proposition~\ref{P2}}
\step 1 (Proof of (a)).

Since $\mathcal G_h$ is non-negative, the conclusion of Theorem~\ref{T1} (a) on compactness remains valid for the modified functional $\mathcal E_h+\mathcal G_h$.

Next we prove the lower-bound: Consider a sequence $(y_h,n_h)\subset H^1(\Omega;\R^3)\times H^1(\Omega_0;\mathbb S^2)$ and $(y,R,n)\in\mathcal A$ such that $(y_h,\nabla_hy_h,n_h)\to (y,R,n)$ stronlgy in $L^2$. In view of Theorem~\ref{T1} (b), in order to conclude the claimed lower bound $\liminf\limits_{h\to 0}\mathcal E_h(y_h,n_h)+\mathcal G_h(y_h,n_h;\hat n_{bc})\geq \mathcal E(y,R,n)+\mathcal G_{\textrm{weak}}(y,R,n;\hat n_{bc})$, we only need to show that
\begin{equation}\label{eq:anch:1}
  \liminf\limits_{h\to 0}\mathcal G_h(y_h,n_h;\hat n_{bc})\geq \mathcal G_{\textrm{weak}}(y,R,n;\hat n_{bc}).
\end{equation}
W.l.o.g.~we may assume that $\limsup\limits_{h\to 0}\mathcal G_h(y_h,n_h;\hat n_{bc})<\infty$. Hence, $f_h:=\left|\frac{\nabla_hy_h^\top n_h}{|\nabla_hy_h^\top n_h|}-\hat n_{bc}  \right|_a\sqrt{\rho}$ defines a bounded sequence in $L^2$.
Since $(\nabla_hy_h,n_h)\to (R,n)$ in $L^2$ and thus pointwise a.e.~(up to extraction of a subsequence), we conclude that $f_h\wto \left|R^\top n-\hat n_{bc}  \right|_a\sqrt{\rho}$ weakly in $L^2$. Thus, \eqref{eq:anch:1} follows by lower semicontinuity of the convex integral functional $f\mapsto \fint_{\Omega_0}|f|^2_a\rho\,dx$ and the definition of $\mathcal G_{\textrm{weak}}$.

We finally prove the upper-bound. To that end let $(y,R,n)\in\mathcal A$ and denote by $(y_h,n_h)\subset H^1(\Omega;\R^3)\times H^1(\Omega_0;\mathbb S^2)$ the recovery sequence constructed in Section~\ref{S:proof:UB} satisfying \eqref{eq:diagonal} for some $\beta\in(0,1)$. Moreover, we note that there exists a constant $C>0$ (independent of $h$) such that
\begin{equation}\label{eq:anch:2}
  \begin{aligned}
    &\,\Big|\mathcal G_h(y_h,n_h;\hat n_{bc})-\mathcal G_{\textrm{weak}}(y,R,n;\hat n_{bc})\Big|\\
    =&\,\Big| \int_{\Omega_0}\Big(\left|\frac{\nabla_hy_h^\top n_h}{|\nabla_hy_h^\top n_h|}-\hat n_{bc}  \right|_a^2-|R^Tn-\hat n_{bc}|^2_a\Big)\rho\,dx\Big|\\
    \leq&\,\left(\int_{\Omega_0}\Big(\left|\frac{\nabla_hy_h^\top n_h}{|\nabla_hy_h^\top n_h|}-R^Tn\right|^2_a\Big)\rho\,dx\right)^\frac12\left(\int_{\Omega_0}\Big(\left|\frac{\nabla_hy_h^\top n_h}{|\nabla_hy_h^\top n_h|}+R^Tn-2\hat n_{bc}\right|^2_a\Big)\rho\,dx\right)^\frac12\\
    \leq&\, C\left\|\tfrac{\nabla_hy_h^\top n_h}{|\nabla_hy_h^\top n_h|}-\hat n_{bc}\right\|_{L^\infty(\Omega_0)}.
  \end{aligned}
\end{equation}
Hence, \eqref{eq:diagonal} implies that $\lim_{h\to 0}\mathcal E_h(y_h,n_h)+\mathcal G_h(y_h,n_h;\hat n_{bc})=\mathcal E(y,R,n)+\mathcal G_{\textrm{weak}}(y,R,n;\hat n_{bc})$ as claimed.

\step 2 (Proof of (b)).

Since $\mathcal G_h$ is non-negative, the conclusion of Theorem~\ref{T1} (a) on compactness remains valid for the modified functional $\mathcal E_h+h^{-\beta}\mathcal G_h$.

Next we prove the lower bound. To that end consider a sequence $(y_h,n_h)\subset H^1(\Omega;\R^3)\times H^1(\Omega_0;\mathbb S^2)$ and suppose that $(y_h,\nabla_hy_h,n_h)\to (y,R,n)$ strongly in $L^2$ for some $(y,R,n)\in\mathcal A$. W.lo.g.~we may assume that $\limsup\limits_{h\to 0}\mathcal E_h(y_h,n_h)+h^{-\beta}\mathcal G_h(y_h,n_h;\hat n_{bc})<\infty$. Note that this implies that $\mathcal G_h(y_h,n_h;\hat n_{bc})\to 0$. Combined with \eqref{eq:anch:1} we infer that
\begin{equation*}
  0=\mathcal G_{\textrm{weak}}(y,R,n;\hat n_{bc})=\int_\omega|R^\top n-\hat n_{bc}|_a^2\bar\rho\,dx_1,
\end{equation*}
and thus $\mathcal G_{\textrm{strong}}(y,R,n;\hat n_{bc})=0$. Hence, Theorem~\ref{T1} (b) yields the claimed lower bound
\begin{equation*}
  \liminf\limits_{h\to 0}\mathcal E_h(y_h,n_h)+h^{-\beta}\mathcal G_h(y_h,n_h;\hat n_{bc})\geq \mathcal E(y,R,n)=\mathcal E(y,R,n)+\mathcal G_{\textrm{strong}}(y,R,n;\hat n_{bc}).
\end{equation*}
We remark that the constructed recovery sequence additionally satisfies the clamped one-sided boundary condition \eqref{eq:UB:eq0}.

For the upper bound let $(y,R,n)\in\mathcal A$. It suffices to construct the recovery sequence in the case $\mathcal G_{\textrm{strong}}(y,R,n;\hat n_{bc})=0$. To that end denote by $(y_h,n_h)\subset H^1(\Omega;\R^3)\times H^1(\Omega_0;\mathbb S^2)$ the recovery sequence of Section~\ref{S:proof:UB} satisfying \eqref{eq:diagonal}. Then
\begin{equation*}
  |h^{-\beta}\mathcal G_h(y_h,n_h;\hat n_{bc})|=  h^{-\beta}|\mathcal G_h(y_h,n_h;\hat n_{bc})-\mathcal G_{\textrm{weak}}(y,R,n;\hat n_{bc})|\leq Ch^{-\beta}\left\|\tfrac{\nabla_hy_h^\top n_h}{|\nabla_hy_h^\top n_h|}-\hat n_{bc}\right\|_{L^\infty(\Omega_0)},
\end{equation*}
and by \eqref{eq:diagonal} we conclude that the right-hand side converges to $0$ as $h\to 0$. Hence,
\begin{equation*}
  \lim\limits_{h\to 0}\big(\mathcal E_h(y_h,n_h)+h^{-\beta}\mathcal G_h(y_h,n_h;\hat n_{bc})\big)=\mathcal E(y,R,n)=\mathcal E(y,R,n)+\mathcal G_{\textrm{strong}}(y,R,n;\hat n_{bc}).
\end{equation*}
We remark that the constructed recovery sequence additionally satisfies the clamped one-sided boundary condition \eqref{eq:UB:eq0}.

\subsection{Proofs of Lemmas~\ref{L:relax}, \ref{L:coeffrep}, \ref{L:correctors1}, and \ref{L:isotropic}}

\begin{proof}[Proof of Lemma~\ref{L:relax}]By appealing to $(\cdot,\cdot)_Q$-orthogonality and the definition of the orthogonal projections $P_{\Hmicro}, P_{\Hres}, P_{\Hmacro}$, we conclude that
  \begin{eqnarray*}
    &&\inf_{\chi\in{\Hrel}}\fint_SQ(K\bar x\otimes e_1+\tfrac{\bar r}{2}  \boldsymbol{1}_{S_0}U+\chi)\\
    &=&\min_{\chi\in{\Hrel}}\|K\bar x\otimes e_1+P_{\Hmicro}\big(\tfrac{\bar r}{2}  \boldsymbol{1}_{S_0}U\big)+\chi\|^2_Q+\|P_{\Hres}\big(\tfrac{\bar r}{2}  \boldsymbol{1}_{S_0}U\big)\|^2_Q\\
    &=&\|P_{\Hmacro}\big(K\bar x\otimes e_1+\tfrac{\bar r}{2}  \boldsymbol{1}_{S_0}U\big)\|^2_Q+\bar r^2E_{\res}(U)\\
    &=&\|\mathbf E(K+\bar rK_{\pre}(U))\big)\|^2_Q+\bar r^2E_{\res}(U)\\
    &=&\bar Q\big(K+\bar rK_{\pre}(U)\big)+\bar r^2E_{\res}(U).
  \end{eqnarray*}

\end{proof}

\begin{proof}[Proof of Lemma~\ref{L:coeffrep}]
  To prove  \eqref{eq:formQ}, note that by linearity of $\mathbf E$ and the definition of $\Psi_i$ we have $\mathbf E(K)=\sum_{i=1}^3k_i\Psi_i$, and thus
  \begin{equation*}
    \bar Q(K)=\big\|\sum_{i=1}^3k_i\Psi_i\big\|_Q^2=\sum_{i,j=1}^3k_ik_j\big(\Psi_i,\Psi_j\big)_Q=k\cdot\mathbb Qk.
  \end{equation*}

  Next, we prove \eqref{eq:Kpre}. From the definition of $\mathbb Q$ and $\mathbb U$ we conclude that
  \begin{equation*}
    \big(P_{\Hmacro}(  \boldsymbol{1}_{S_0}U_j),\Psi_{\hat i}\Big)_Q=\big(\boldsymbol{1}_{S_0}U_j,\Psi_{\hat i}\Big)_Q=\sum_{i=1}^3\big(\mathbb Q^{-1}\mathbb U\big)_{ij}\Big(\Psi_i,\Psi_{\hat i}\Big)_Q\qquad\text{for }\hat i=1,2,3.
  \end{equation*}
  Hence, since $\{\Psi_1,\Psi_2,\Psi_3\}$ is a basis of $\Hmacro$, we get
  \begin{equation}\label{eq:Pmacroident}
    P_{\Hmacro}(\boldsymbol{1}_{S_0}U_j)=\sum_{i=1}^3\big(\mathbb Q^{-1}\mathbb U\big)_{ij}\Psi_i.
  \end{equation}
  In view of the definition of $K_{\pre}$, an application of $\mathbf E^{-1}$ to both sides yields
  \begin{equation*}
    K_{\pre}(U_j)=\frac12\sum_{i=1}^3\big(\mathbb Q^{-1}\mathbb U\big)_{ij}\mathbf E^{-1}\big(\Psi_i\big).
  \end{equation*}
  Combined with the definition of $\Psi_i$, which yields $\mathbf E^{-1}\big(\Psi_i\big)=K_i$, we conclude $K_{\pre}(U_j)=\tfrac12\sum_{i=1}^3\big(\mathbb Q^{-1}\mathbb U\big)_{ij}K_i$ and \eqref{eq:Kpre} follows by linearity of $K_{\pre}$.
  Finally, we note that in view of orthogonal decomposition $\mathbb H=\Hres\oplus\Hmacro\oplus\Hrel$ and \eqref{eq:Pmacroident}, we have $\Phi_j=P_{\Hres}(\boldsymbol 1_{S_0}U_j)$, and thus  \eqref{eq:Eres} follows.
\end{proof}

\begin{proof}[Proof of Lemma~\ref{L:correctors1}]
  By the variational characterization of $P_{\Hrel}$ we have for all $F\in\mathbb H$,
  \begin{equation*}
    P_{\Hrel}F=-\chi_F\text{ where }\chi_F\in\Hrel\text{ is characterized by }\|F+\chi_F\|_Q=\inf_{\chi\in\Hrel}\|F+\chi\|_Q.
  \end{equation*}
  In view of Lemma~\ref{L:Hrelrep} we further conclude that $\chi_F=(a_F,\bar\nabla\varphi_F)$, where $(a_F,\varphi_F)\in\R\times H^1_{\rm av}(S;\R^3)$ is the unique minimizer of the functional \eqref{eq:corr1}. Applied with $F=\boldsymbol 1_{S_0}U_j$, the claimed identity for $\Phi_j$ follows. The identity for $\Psi_i$ follows by considering $F=\sym(K_i\barx\otimes e_1)$. Indeed, by definition we have $\Psi_i=P_{\Hmacro}\big(\sym(K_i\barx\otimes e_1)\big)$ and since $\sym(K_i\barx\otimes e_1)\in\Hmicro$, we conclude (with help of the orthogonal decomposition $\Hmicro=\Hmacro\oplus\Hrel$) that $\Psi_j=\sym(K_i\barx\otimes e_1)-P_{\Hrel}\big(\sym(K_i\barx\otimes e_1)\big)$.
\end{proof}
\begin{proof}[Proof of Lemma~\ref{L:isotropic} (a)]
  \todoneukamm{Note that $Q(G)=\frac12\mathbb LG\cdot G$ where
    \protect\begin{equation*}
      \frac12\mathbb LG=\frac\lambda 2(\trace G)I_{3\times 3}+\mu\sym G.
      \protect\end{equation*}
  }
  \step 1 (Calculation of $\mathbb Q$).
  
  We first claim that $\fint_S\bar\nabla\varphi_i\,d\bar x=0$. To that end consider
  \begin{equation*}
    A:=\fint_S\bar\nabla\varphi_i\,d\bar x,\qquad \tilde\varphi(\bar x):=\varphi_i(\bar x)-A\bar x.
  \end{equation*}
  Since $Q$ is independent of $\bar x$ and $\fint_SK_i\barx\otimes e_1\,d\bar x=0$, we have
  \begin{equation*}
    \fint_SQ\big(K_i\barx\otimes e_1+(a_ie_1,\bar\nabla\varphi_i)\big)\,d\bar x=
    \fint_SQ\big(K_i\barx\otimes e_1+(0,\bar\nabla\tilde\varphi)\big)\,d\bar x+
    Q\big((ae_1,A)\big).
  \end{equation*}
  By minimality of $(a_i,\varphi_i)$ we conclude that $(a,A)=0$ and thus the claim follows.

  \textit{Substep 1.1} We prove the formula for $q_1$ and claim that
  \begin{equation*}
    \Psi_1 =\sym\left(
      \begin{pmatrix}
        0\\
        \tfrac{x_3}{\sqrt 2}+\partial_2\alpha_S\\
        -\tfrac{x_2}{\sqrt 2}+\partial_3\alpha_S
      \end{pmatrix}\otimes e_1\right).
  \end{equation*}
  We already now that $a_1=0$ and $\fint_S\bar\nabla\varphi_1\,d\bar x=0$.
  For the argument write $\varphi_1=(\alpha,\bar\varphi)^\top$ and $\mathcal K_1:=\sym(K_i\barx\otimes e_1)$. Since $\trace \mathcal K_1=0$ a.e.~in $S$, a direct calculation yields
  \todoneukamm{
    \protect\begin{equation*}
      \mathcal K_1+\sym(0,\nabla\varphi)=\frac12
      \protect\begin{pmatrix}
        0&\frac{x_3}{\sqrt 2}+\partial_2\alpha&-\frac{x_2}{\sqrt 2}+\partial_3\alpha\\
        \frac{x_3}{\sqrt 2}+\partial_2\alpha&&\\
        -\frac{x_2}{\sqrt 2}+\partial_3\alpha&&
        \protect\end{pmatrix}+
      \protect\begin{pmatrix}
        0&0_2\\
        0_2&\sym\bar\nabla\bar\varphi
        \protect\end{pmatrix}
      \protect\end{equation*}
  }
  \begin{equation*}
    \|\mathcal K_1+\sym(0,\bar\nabla\varphi_1)\|^2_Q=\fint_S\tfrac{\mu}{2}(|\partial_2\alpha+\tfrac{x_3}{\sqrt 2}|^2+|\partial_2\alpha-\tfrac{x_2}{\sqrt 2}|^2)+\tfrac{\lambda}{2}(\trace\bar\nabla\bar\varphi)^2+\mu|\sym\bar\nabla\bar\varphi|^2\,d\bar x.
  \end{equation*}
  Since $\varphi_1$ is a minimizer, we conclude that $\bar\varphi=0$ and that $\alpha$ minimizes \eqref{eq:alpha}. Hence, $\varphi_1=(\alpha_S,0,0)$. Now, the identities for $\Psi_1$ and $q_1$ follow by a direct calculation.
  \smallskip

  \textit{Substep 1.2} We prove the formulas for $q_2$ and $q_3$ and claim that
  \begin{align*}
    \Psi_2=\frac{x_2}{\sqrt 2}{\rm diag}\big(1,-\tfrac12\tfrac{\lambda}{\lambda+\mu},-\tfrac12\tfrac{\lambda}{\lambda+\mu}\big),\qquad  \Psi_3=\frac{x_3}{\sqrt 2}{\rm diag}\big(1,-\tfrac12\tfrac{\lambda}{\lambda+\mu},-\tfrac12\tfrac{\lambda}{\lambda+\mu}\big).
  \end{align*}
  Indeed, by appealing to the Euler-Lagrange equation  one can check that
  \begin{equation}\label{eq:varphi2}
    \varphi_2=-\frac1{4\sqrt 2}\frac{\lambda}{\lambda+\mu}
    \begin{pmatrix}0\\
      x_2^2-x_3^2\\2x_2x_3
    \end{pmatrix}+C_2,\qquad     \varphi_3=-\frac1{4\sqrt 2}\frac{\lambda}{\lambda+\mu}
    \begin{pmatrix}0\\
      2x_2x_3\\x_3^2-x_2^2
    \end{pmatrix}+C_3
  \end{equation}
  where $C_2,C_3\in\R^3$ are chosen such that $\fint_S\varphi_2=\fint_S\varphi_3=0$.  Now, the identities for $\Psi_i,q_i$, $i=2,3$ follow by direct calculations.
  \smallskip

  \textit{Substep 1.3} A direct calculation shows that $\Psi_1,\Psi_2,\Psi_3$ are $(\cdot,\cdot)_Q$-orthogonal. Hence, $\mathbb Q={\rm diag}(q_1,q_2,q_3)$ as claimed.

  \step 2 (Formulas for $K_{\pre}$).
  
  \todoneukamm{Nachfolgendes muss ich nochmal ueberpreufen.}
  In the isotropic case we have for $i=1,2,3$ and all $U\in\R^{3\times 3}_{\dev}$,
  \begin{eqnarray*}
    \big(\boldsymbol{1}_{S_0}U,\Psi_i\big)_Q
    &=&\mu\fint_S  \boldsymbol{1}_{S_0}U\cdot\Psi_i\,d\bar x,
  \end{eqnarray*}
  since $\trace U=0$. Hence, a direct calculation yields
  \begin{equation*}
    \mathbb U=
    \begin{pmatrix}
      0& \mu\tfrac{\sqrt 2}2\fint_{S}\boldsymbol 1_{S_0}(\partial_2\alpha_S{+}\tfrac{x_3}{\sqrt 2})\,d\bar x& \mu\tfrac{\sqrt 2}2\fint_{S}\boldsymbol 1_{S_0}(\partial_3\alpha_S{-}\tfrac{x_2}{\sqrt 2})\,d\bar x &0&0\\
      0& 0 & 0 & \frac{\mu(3\lambda+2\mu)}{\lambda+\mu}\frac{1}{2\sqrt 3}\fint_S\boldsymbol 1_{S_0}x_2\,d\bar x&0\\
      0& 0 & 0 & \frac{\mu(3\lambda+2\mu)}{\lambda+\mu}\frac{1}{2\sqrt 3}\fint_S\boldsymbol 1_{S_0}x_3\,d\bar x&0
    \end{pmatrix},
  \end{equation*}
  and thus,
  \begin{equation*}
    \mathbb Q^{-1}\mathbb U=
    \begin{pmatrix}
      0& \frac{\sqrt{2}}{|S|c_S}\int_{S_0}\partial_2\alpha_S{+}\tfrac{x_3}{\sqrt 2}\,d\bar x& \frac{\sqrt{2}}{|S|c_S}\int_{S_0}\partial_3\alpha_S{-}\tfrac{x_2}{\sqrt 2}\,d\bar x &0&0\\
      0& 0 & 0 & \frac{2}{\sqrt 3}\frac{\int_{S_0}x_2\,d\bar x}{\int_Sx_2^2\,d\bar x}&0\\
      0& 0 & 0 & \frac{2}{\sqrt 3}\frac{\int_{S_0}x_3\,d\bar x}{\int_Sx_3^2\,d\bar x}&0
    \end{pmatrix}.
  \end{equation*}

\end{proof}

\begin{proof}[Proof of Lemma~\ref{L:isotropic} (b)]
  Since $S$ is a disc centered at $0$, the vector $(x_3,-x_2)^\top$ is tangential for all  $\bar x\in \partial S$ and a short calculation shows that
  \begin{equation*}
    \fint_S\Big|
    \begin{pmatrix}
      \partial_2\alpha_S\\\partial_3\alpha_S
    \end{pmatrix}
    +\frac{1}{\sqrt 2}
    \begin{pmatrix}
      x_3\\-x_2
    \end{pmatrix}\Big|^2\,d\bar x=\fint_S|\nabla\alpha_S|^2+    \frac12\fint_S|(x_3,-x_2)^\top|^2\,d\bar x.    
  \end{equation*}
  Since $\alpha_S$ is a minimizer, we conclude $\alpha_S=0$. Now the identities for $q_1,q_2,q_3$ and $K_{\pre}$ follow by direct calculations starting from the formulas derived in Lemma~\eqref{L:isotropic} (a). We note that for $K_{\pre}$ we also use that $\int_{S_0}x_2\,d\bar x=0$.
  
  \todoneukamm{Nebenrechnung:
    For $S=B(0;\pi^{-1/2})$ and $S_0=S\cap\{x_3>0\}$ have $|S|=1$, and
    \protect\begin{eqnarray*}
              c_S=\frac12\fint_{B(0;\pi^{-1/2})}|x|^2=\frac12\frac{|\partial B(0;1)|}{|B(0;\pi^{-1/2})|}\int_0^{-\pi^{-\frac12}}r^3\,dr=\frac1{4\pi}.
              \protect\end{eqnarray*}
            With    $r:=\pi^{-\frac12}$ get
            \protect\begin{eqnarray*}
                      \int_{\{x_3\geq 0\}\cap S}x_3\,d\bar x
                      &=&\int_{-r}^{r}\int_0^{\sqrt{r^2-x_2^2}}x_3\,dx_3\,dx_2=\frac12\int_{-r}^r(r^2-x_2^2)\,dx_2=r^3-\frac13 r^3=\frac23 r^3=\frac23\pi^{-3/2}.
                          \protect\end{eqnarray*}
                        Moreover,
                        \protect\begin{eqnarray*}
                                  \fint_{B(0;r)}x_3^2\,d\bar x=\frac12\fint_{B(0;r)}|\bar x|^2\,d\bar x=\frac1{2|B(0;r)|}\int_0^r|\partial B(0;\rho)|\rho^2\,d\rho=\frac{1}{r^2}\int_0^r\rho^3\,d\rho=\frac1 4 r^2 = \frac{1}{4\pi}
                                  \protect\end{eqnarray*}
                                Since $\int_{S_0}x_2\,dx_2=0$, get
                                \protect\begin{eqnarray*}
                                          K_{\pre}(U)&=\,&\,
                                                           \frac{4 u_2}{3\sqrt\pi}K_1+\frac{16 u_4}{3\sqrt{3\pi}}K_3\\      
                                          \protect\end{eqnarray*}
                                      }
                                    \end{proof}

                                  \subsection*{Acknowledgments}
                                  The authors acknowledge support by the German Research Foundation (DFG) via the
                                  re\-search unit FOR 3013 ``Vector- and tensor-valued surface PDEs'' (grant no.~BA2268/6--1 and \mbox{NE2138/4-1}).
                                  \bibliographystyle{alpha}
                                  \bibliography{bibliography.bib}
                                \end{document}